\newtheorem{theorem}{Theorem}[section]
\newtheorem{proposition}[theorem]{Proposition}
\newtheorem{lemma}[theorem]{Lemma}
\newtheorem{corollary}[theorem]{Corollary}
\newtheorem{definition}[theorem]{Definition}
\newtheorem{example}[theorem]{Example}
\begin{document}
\author{Hee Sun Jung$^1$ and Ryozi Sakai$^2$}
\address{$^{1}$Department of Mathematics Education, Sungkyunkwan University,
Seoul 110-745, Republic of Korea.}
\email{hsun90@skku.edu}
\address{$^{2}$Department of Mathematics, Meijo University, Nagoya 468-8502, Japan.}
\email{ryozi@crest.ocn.ne.jp}
\title[]{Higher order derivatives of approximation polynomials on $\mathbb{R}$}
\date{\today}
 \maketitle

\begin{abstract}
D. Leviatan has investigated the behavior of the higher order derivatives of approximation polynomials
of the differentiable function $f$ on $[-1,1]$.
Especially,  when $P_n$ is the best approximation of $f$,
he estimates the differences $\|f^{(k)}-P_n^{(k)}\|_{L_\infty([-1,1])}$,   $k=0,1,2,...$.
In this paper, we give the analogies for them with respect to the differentiable functions on $\mathbb{R}$,
and we apply the result to the monotone approximation.
\end{abstract}

MSC: 41A10, 41A50\\
Keywords; polynomial of the best approximation, exponential-type weight, monotone approximation

\setcounter{equation}{0}
\section{Introduction}

Let $\mathbb{R}=(-\infty,\infty)$ and ${\mathbb{R}}^+=[0,\infty)$.
We say that $f:  {\mathbb{R}}\rightarrow {\mathbb{R}^+}$ is quasi-increasing
if there exists $C>0$ such that $f(x)\leqslant Cf(y)$ for $0<x<y$.
The notation $f(x)\sim g(x)$ means that there are positive constants $C_1,  C_2$
such that for the relevant range of $x$, $C_1\leqslant f(x)/g(x)\leqslant C_2$.
The similar notation is used for sequences and sequences of functions.
Throughout $C,C_1,C_2,...$ denote positive constants independent of $n,x,t$.
The same symbol does not necessarily denote the same constant in different occurrences.
We denote the class of polynomials with degree $n$ by $\mathcal{P}_n$.

First, we introduce some classes of weights.
Levin and Lubinsky \cite{[8]} introduced the class of weights on ${\mathbb{R}}$ as follows.

\begin{definition}\label{Definition 1.1}
Let $Q:   \mathbb{R}\rightarrow [0,\infty)$ be a continuous even function, and satisfy the following properties:
\item[\,\,\,(a)]   $Q'(x)$ is continuous in $\mathbb{R}$, with $Q(0)=0$.
\item[\,\,\,(b)]   $Q''(x)$ exists and is positive in $\mathbb{R}\backslash\{0\}$.
\item[\,\,\,(c)]   $\lim_{x\rightarrow \infty}Q(x)=\infty.$
\item[\,\,\,(d)]   The function
\begin{equation*}
T_w(x):=\frac{xQ'(x)}{Q(x)},      x\neq 0
\end{equation*}
is quasi-increasing in $(0,\infty)$, with
\begin{equation*}
T_w(x)\ge \Lambda>1,      x\in \mathbb{R}\backslash\{0\}.
\end{equation*}
\item[\,\,\,(e)]   There exists $C_1>0$ such that
\begin{equation*}
  \frac{Q''(x)}{|Q'(x)|}\le C_1\frac{|Q'(x)|}{Q(x)},      \,\,\, a.e. \,\,\,   x\in \mathbb{R}.
\end{equation*}
Furthermore, if there also exists a compact subinterval $J (\ni 0)$ of $\mathbb{R}$, and $C_2>0$ such that
\begin{equation*}
  \frac{Q''(x)}{|Q'(x)|}\ge C_2\frac{|Q'(x)|}{Q(x)},      \,\,\, a.e. \,\,\,   x\in \mathbb{R}\backslash J,
\end{equation*}
then we write $w=\exp(-Q)\in \mathcal{F}(C^2+)$.
\end{definition}
For convenience, we denote $T$ instead of $T_w$, if there is no confusion.
Next, we give some typical examples of $\mathcal{F}(C^2+)$.
\begin{example}[{\cite{[5]}}]\label{Example 1.2} {\rm
(1) If $T(x)$ is bounded,
then we call the weight $w=\exp(-Q(x))$ the Freud-type weight
and we write $w\in\mathcal{F}^*\subset \mathcal{F}(C^2+)$.
\item[\,\,\,(2)] When $T(x)$ is unbounded,
then we call the weight $w=\exp(-Q(x))$ the Erd\"{o}s-type weight: (a) For $\alpha>1$,   $l\ge 1$ we define
\begin{equation*}
Q(x):=Q_{l,\alpha}(x)=\exp_l(|x|^{\alpha})-\exp_l(0),
\end{equation*}
where $\exp_l(x)=\exp(\exp(\exp\ldots\exp x)\ldots)      (l\,\, \textrm{times})$.
More generally, we define
\begin{equation*}
Q_{l,\alpha,m}(x)=|x|^m\{\exp_l (|x|^{\alpha}) -\tilde{\alpha}\exp_l(0)\},
\quad \alpha+m>1,\,\,   m\ge 0,\,\,   \alpha\ge 0,
\end{equation*}
where $\tilde{\alpha}=0$ if $\alpha=0$, and otherwise $\tilde{\alpha}=1$.
We note that $Q_{l,0,m}$ gives a Freud-type weight,
and $Q_{l,\alpha,m}$, ($\alpha >0$) gives an Erd\"{o}s-type weight.
\item[\,\,\,(3)] For $\alpha>1$,
$Q_{\alpha}(x)=(1+|x|)^{|x|^{\alpha}} -1 $ gives also an Erd\"{o}s-type weight.
}
\end{example}
For a continuous function $f : [-1,1] \to \mathbb{R}$, let
\begin{equation*}
E_n(f)=\inf_{P\in\mathcal{P}_n}\|f-P\|_{L_\infty([-1,1])}=\inf_{P\in\mathcal{P}_n}\max_{x\in[-1,1]}|f(x)-P(x)|.
\end{equation*}
D. Leviatan \cite{[7]} has investigated the behavior of
the higher order derivatives of approximation polynomials for the differentiable function $f$ on $[-1,1]$,
as follows:

\noindent
{\bf Theorem} (Leviatan \cite{[7]}). {\it
For $r\ge 0$ we let  $f\in C^{(r)}[-1,1]$,
and let $P_n\in \mathcal{P}_n$ denote the polynomial of best approximation of $f$ on $[-1,1]$.
Then for each $0\le k\le r$ and every $-1\le x\le 1$,
\begin{equation*}
\left|f^{(k)}(x)-P_n^{(k)}(x)\right|\le \frac{C_r}{n^k}\Delta_n^{-k}(x)E_{n-k}\left(f^{(k)}\right),
\quad     n\ge k,
\end{equation*}
where $\Delta_n(x):=\sqrt{1-x^2}/n+1/n^2$ and $C_r$ is an absolute constant which depends only on $r$.
}

 In this paper,  we estimate $\left|\left(f^{(k)}(x)-P_{n;f}^{(k)}(x)\right)w(x)\right|$,
 $x\in\mathbb{R}$,  $ k=0,1,...,r$
for $f\in C^{(r)}(\mathbb{R})$ and for some exponential type weight $w$ in $L_p(\mathbb{R})$-space,
$1 < p\le \infty$,
where $P_{n;f}\in \mathcal{P}_n$ is the best approximation of $f$.
Furthermore, we give an application for a monotone approximation with linear differential operators.
In Section 2 we write the theorems in the space $L_\infty(\mathbb{R})$,
then we also denote a certain assumption and some notations which need to state the theorems.
In Section 3 we give some lemmas and the proofs of theorems.
In Section 4, we consider the similar problem in $L_p(\mathbb{R})$-space, $1<p<\infty$.
In Section 5, we give a simple application of the result to the monotone approximation.

\setcounter{equation}{0}
\section{Theorems and Preliminaries}
First, we introduce some well-known notations.
If $f$ is a continuous function on $\mathbb{R}$, then we define
\begin{equation*}
  \left\|fw\right\|_{L_\infty(\mathbb{R})}:=\sup_{t\in\mathbb{R}}|f(t)w(t)|,
\end{equation*}
and for $1\le p<\infty$ we denote
\begin{equation*}
  \|fw\|_{L_p(\mathbb{R})}:=\left(\int_{\mathbb{R}}\left|f(t)w(t)\right|^pdt\right)^{1/p}.
\end{equation*}
Let $1\le p\le \infty$.
If $\|wf\|_{L_p(\mathbb{R})}<\infty$, then we write $wf\in L_p(\mathbb{R})$,
and here if $p=\infty$, we suppose that $f\in C(\mathbb{R})$ and $\lim_{|x|\rightarrow \infty}|w(x)f(x)|=0$.
We denote the rate of approximation of  $f$ by
\begin{equation*}
E_{p,n}(w;f):=\inf_{P\in\mathcal{P}_n}\left\|(f-P)w\right\|_{L_p(\mathbb{R})}.
\end{equation*}
The Mhaskar-Rakhmanov-Saff numbers $a_x$ is defined as follows:
\begin{equation*}
x=\frac{2}{\pi}\int_0^1\frac{a_xuQ'(a_xu)}{\sqrt{1-u^2}}du,  \quad x>0.
\end{equation*}
To write our theorems we need some preliminaries. We need further assumptions.
\begin{definition}\label{Definition 2.1}
Let $w=\exp(-Q)\in \mathcal{F}(C^2+)$ and  $0< \lambda<(r+2)/(r+1)$. Let $r\ge 1$ be an integer.
Then we write $w\in\mathcal{F}_\lambda(C^{r+2}+)$
if $Q\in C^{(r+2)}(\mathbb{R}\backslash\{0\})$
and there exist two constants $C>1$ and $K\ge 1$ such that for all $|x|\ge K$,
\begin{equation*}
\frac{|Q'(x)|}{Q^{\lambda}(x)} \leq C \quad \mbox{and} \quad
\left| \frac{Q''(x)}{Q'(x)} \right| \sim \left|\frac{Q^{(k+1)}(x)}{Q^{(k)}(x)} \right|
\end{equation*}
for every $k=2,...,r$ and also
\begin{equation*}
\left| \frac{Q^{(r+2)}(x)}{Q^{(r+1)}(x)} \right| \leq C \left|\frac{Q^{(r+1)}(x)}{Q^{(r)}(x)} \right|.
\end{equation*}
In particular, $w\in\mathcal{F}_\lambda(C^{3}+)$ means that $Q\in C^{(3)}(\mathbb{R}\backslash\{0\})$ and
\begin{equation*}
\frac{|Q'(x)|}{Q^{\lambda}(x)} \leq C \quad
\mbox{and}  \quad \left| \frac{Q'''(x)}{Q''(x)} \right| \le C \left|\frac{Q''(x)}{Q'(x)} \right|
\end{equation*}
hold for $|x| \geq K$.
In addition, let $\mathcal{F}_\lambda(C^{2}+):=\mathcal{F}(C^2+)$.
\end{definition}

From {\cite{[5]}}, we know that Example \ref{Example 1.2} (2),(3)
satisfy all conditions of Definition \ref{Definition 2.1}.
Under the same condition of Definition \ref{Definition 2.1} we obtain an interesting theorem as follows:
\begin{theorem}[{\cite[Theorem 4.1, 4.2]{[10]}}]\label{Theorem 2.2}
Let $r$ be a positive integer, $0< \lambda<(r+2)/(r+1)$
and let $w=\exp(-Q)\in \mathcal{F}_\lambda(C^{r+2}+)$.
Then for any $\mu$, $\nu$, $\alpha$, $\beta\in\mathbb{R}$,
we can construct a new weight $w_{\mu,\nu,\alpha,\beta}\in \mathcal{F}_\lambda(C^{r+1}+)$ such that
\begin{equation*}
T_w^{\mu}(x)(1+x^2)^\nu(1+Q(x))^{\alpha}(1+|Q'(x)|)^\beta w(x)\sim w_{\mu,\nu,\alpha,\beta}(x)
\end{equation*}
on $\mathbb{R}$, and
\begin{equation*}
a_{n/c}(w)\le a_n(w_{\mu.\nu,\alpha,\beta})\le a_{cn}(w),  \quad    c\ge 1,
\end{equation*}
\begin{equation*}
T_{w_{\mu,\nu,\alpha,\beta}}(x)\sim T_w(x)
\end{equation*}
hold on $\mathbb{R}$.
\end{theorem}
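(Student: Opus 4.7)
The plan is to construct $Q_{\mu,\nu,\alpha,\beta}$ explicitly by modifying $Q$ with logarithmic correction terms so that $w_{\mu,\nu,\alpha,\beta}=\exp(-Q_{\mu,\nu,\alpha,\beta})$ satisfies the required equivalence. Concretely, I would define, at least for $|x|$ sufficiently large,
\begin{equation*}
Q_{\mu,\nu,\alpha,\beta}(x) := Q(x) - \mu\log T_w(x) - \nu\log(1+x^2) - \alpha\log\bigl(1+Q(x)\bigr) - \beta\log\bigl(1+|Q'(x)|\bigr),
\end{equation*}
and then smoothly extend to a continuous even function near $0$ (where the problematic factors are bounded and can be replaced by a smooth even bump) so that Definition~\ref{Definition 1.1}(a)--(c) hold. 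The equivalence $T_w^{\mu}(1+x^2)^{\nu}(1+Q)^{\alpha}(1+|Q'|)^{\beta}w\sim w_{\mu,\nu,\alpha,\beta}$ is then automatic from the construction on the tail, and the two sides agree up to bounded factors on the compact region.

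Next I would control the derivatives of each correction term. The key asymptotic identities come from Definition~\ref{Definition 2.1}: the relation $|Q^{(k+1)}/Q^{(k)}|\sim |Q''/Q'|$ for $k=2,\dots,r$ propagates the same scale through all higher derivatives, while the condition $|Q'|\le CQ^{\lambda}$ together with $\lambda<(r+2)/(r+1)$ lets one bound derivatives of $\log(1+Q)$ and $\log(1+|Q'|)$ by a small multiple of $Q^{(k)}$. Differentiating $\log T_w = \log|Q'|+\log|x|-\log Q$ and using condition (e) of Definition~\ref{Definition 1.1} gives $|(\log T_w)^{(k)}|\lesssim |Q^{(k)}|/Q$ (times logarithmic factors), so each correction is a perturbation of strictly smaller order than $Q^{(k)}$ at infinity. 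Consequently $Q_{\mu,\nu,\alpha,\beta}^{(k)}(x)=Q^{(k)}(x)(1+o(1))$ for $1\le k\le r+1$, which delivers conditions (a)--(e) of Definition~\ref{Definition 1.1} and also the higher-order ratio conditions of Definition~\ref{Definition 2.1} for the index $r+1$ in place of $r+2$ (we spend one derivative because the correction contains $Q'$).

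From $Q_{\mu,\nu,\alpha,\beta}\sim Q$ and $Q_{\mu,\nu,\alpha,\beta}'\sim Q'$ on the tail, the equivalences
\begin{equation*}
T_{w_{\mu,\nu,\alpha,\beta}}(x)=\frac{xQ_{\mu,\nu,\alpha,\beta}'(x)}{Q_{\mu,\nu,\alpha,\beta}(x)}\sim \frac{xQ'(x)}{Q(x)}=T_w(x)
\end{equation*}
and the two-sided bound on $a_n(w_{\mu,\nu,\alpha,\beta})$ both follow: for the Mhaskar--Rakhmanov--Saff numbers one substitutes the equivalence into the defining integral $x=\frac{2}{\pi}\int_0^1 a_x u Q'(a_x u)(1-u^2)^{-1/2}\,du$ and uses monotonicity together with $Q'_{\mu,\nu,\alpha,\beta}(cs)\sim Q'(cs)$ (with constants depending on $\mu,\nu,\alpha,\beta$ but independent of $n$) to sandwich $a_n(w_{\mu,\nu,\alpha,\beta})$ between $a_{n/c}(w)$ and $a_{cn}(w)$ for a suitable $c\ge 1$.

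The main obstacle is the bookkeeping in the middle step: verifying the chain $|Q_{\mu,\nu,\alpha,\beta}^{(k+1)}/Q_{\mu,\nu,\alpha,\beta}^{(k)}|\sim |Q_{\mu,\nu,\alpha,\beta}''/Q_{\mu,\nu,\alpha,\beta}'|$ for all $k$ up to $r$ requires differentiating composite expressions like $\log(1+|Q'|)^{\beta}$ via Fa\`a di Bruno and controlling each term via condition (e) of Definition~\ref{Definition 1.1} and the ratio hypotheses of Definition~\ref{Definition 2.1}. Once one establishes the single estimate $|(\log g(x))^{(k)}|\lesssim |g^{(k)}(x)|/|g(x)|$ (up to lower-order terms in the $g^{(j)}/g$ hierarchy) for $g\in\{Q,Q',T_w,1+x^2\}$, the rest of the proof is a matter of assembling these bounds and comparing them to $|Q^{(k+1)}/Q^{(k)}|$, which by hypothesis all behave like $|Q''/Q'|$.
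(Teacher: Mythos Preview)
The paper does not prove Theorem~\ref{Theorem 2.2}; it is quoted verbatim from \cite[Theorem 4.1, 4.2]{[10]} and used as a black box. There is therefore no in-paper argument to compare your proposal against.

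That said, your construction is the natural one and almost certainly matches what is done in \cite{[10]}: one defines $Q_{\mu,\nu,\alpha,\beta}$ by subtracting the logarithms of the four auxiliary factors from $Q$ (with a smooth even patch near the origin), so that the claimed $\sim$-equivalence is immediate, and then checks that each logarithmic correction contributes a lower-order perturbation to $Q^{(k)}$ for $1\le k\le r+1$. Your observation that the $\log(1+|Q'|)$ term costs one order of differentiability --- hence the drop from $\mathcal{F}_\lambda(C^{r+2}+)$ to $\mathcal{F}_\lambda(C^{r+1}+)$ --- is exactly the point. The equivalences $Q_{\mu,\nu,\alpha,\beta}\sim Q$ and $Q_{\mu,\nu,\alpha,\beta}'\sim Q'$ then give $T_{w_{\mu,\nu,\alpha,\beta}}\sim T_w$ and the MRS-number sandwich directly, as you describe. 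The only part of your sketch that would require genuine care in a full write-up is the Fa\`a di Bruno bookkeeping you flag at the end, together with making precise how the hypothesis $|Q'|\le CQ^{\lambda}$ with $\lambda<(r+2)/(r+1)$ is used to absorb the iterated-logarithm derivatives; but you have correctly identified this as the crux.
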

For a given $\alpha\in \mathbb{R}$ and $w\in\mathcal{F}(C^2+)$,
we let $w_{\alpha}\in\mathcal{F}(C^2+)$ satisfy $w_\alpha(x) \sim T_{w}^{\alpha}(x)w(x)$,
and let $P_{n;f,w_\alpha}\in\mathcal{P}_n$ be the best approximation of $f$ with respect to the weight $w_\alpha$, that is,
\begin{equation*}
\left\|(f-P_{n;f,w_\alpha})w_{\alpha}\right\|_{L_\infty(\mathbb{R})}=E_{n}(w_\alpha,f)
:=\inf_{P\in\mathcal{P}_n}\left\|(f-P)w_\alpha\right\|_{L_\infty(\mathbb{R})}.
\end{equation*}
Then we have the main result as follows:
\begin{theorem}\label{Theorem 2.3}
Let $r \ge 0$ be an integer. Let $w=\exp(-Q)\in \mathcal{F}_\lambda(C^{r+2}+)$ and  $0<  \lambda<(r+2)/(r+1)$.
Suppose that $f\in C^{(r)}(\mathbb{R})$ with
\begin{equation*}
 \lim_{|x|\rightarrow \infty}T^{1/4}(x)f^{(r)}(x)w(x)=0.
\end{equation*}
Then there exists an absolute constant $C_r>0$ which depends only on $r$ such that
for $0\le k\le r$ and $x\in\mathbb{R}$,
\begin{eqnarray*}
\left|\left(f^{(k)}(x)-P_{n;f,w}^{(k)}(x)\right)w(x)\right|
&\le& C_r T^{k/2}(x)E_{n-k}\left(w_{1/4},f^{(k)}\right)\\
&\le& C_r T^{k/2}(x) \left(\frac{a_n}{n}\right)^{r-k}E_{n-r}\left(w_{1/4},f^{(r)}\right).
\end{eqnarray*}
When $w\in\mathcal{F}^*$, we can replace $w_{1/4}$ with $w$ in the above.
\end{theorem}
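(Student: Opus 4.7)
The plan is to combine a dyadic telescoping of the sequence of best polynomial approximations with a pointwise weighted Markov--Bernstein inequality that produces precisely the factor $T^{k/2}(x)$ appearing in the theorem. First I would dispose of $k=0$: since $T\ge\Lambda>1$ on $\mathbb R\setminus\{0\}$, one has $w(x)\le C\,w_{1/4}(x)$, hence $E_n(w,f)\le C E_n(w_{1/4},f)$ and the first bound is immediate from the definition of $P_{n;f,w}$.

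For $k\ge 1$, set $m_j:=2^j n$ and $\Pi_j:=P_{m_j;f,w}$. By Theorem~\ref{Theorem 2.2}, $w_{1/4}\in\mathcal F_\lambda(C^{r+1}+)$, so a weighted Jackson theorem is available; together with the hypothesis $\lim_{|x|\to\infty}T^{1/4}(x)f^{(r)}(x)w(x)=0$ this yields $E_{m_j}(w,f)\to 0$, whence $\Pi_j\to f$ in the $w$-weighted sup-norm and
\begin{equation*}
f-\Pi_0=\sum_{j\ge 0}(\Pi_{j+1}-\Pi_j),\qquad \|(\Pi_{j+1}-\Pi_j)w\|_\infty\le 2E_{m_j}(w,f).
\end{equation*}
Each $R_j:=\Pi_{j+1}-\Pi_j$ lies in $\mathcal P_{m_{j+1}}$, and the pointwise weighted Markov--Bernstein inequality for $\mathcal F(C^2+)$ weights gives
\begin{equation*}
|R_j^{(k)}(x)\,w(x)|\le C\,(m_{j+1}/a_{m_{j+1}})^{k}\,T^{k/2}(x)\,\|R_j w\|_\infty.
\end{equation*}
Differentiating the series termwise and combining with the weighted Jackson estimate $E_m(w,f)\le C(a_m/m)^{k}E_{m-k}(w_{1/4},f^{(k)})$, the dyadic sum collapses (the sequence $\{a_m/m\}$ is slowly varying and $E_{m-k}(w_{1/4},f^{(k)})$ is monotonically decreasing, so the summation is essentially geometric) to a bound of the shape $C\,T^{k/2}(x)E_{n-k}(w_{1/4},f^{(k)})$, which is the first inequality.

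The second inequality $E_{n-k}(w_{1/4},f^{(k)})\le C(a_n/n)^{r-k}E_{n-r}(w_{1/4},f^{(r)})$ is a weighted Favard-type bound, obtained by iterating the weighted Jackson theorem $(r-k)$ times at the weight $w_{1/4}$. This iteration is what forces the hypothesis $w\in\mathcal F_\lambda(C^{r+2}+)$ in full strength, since each application of Theorem~\ref{Theorem 2.2} consumes one order of smoothness of $Q$ when passing through the auxiliary weights $w_{1/4},w_{1/2},\ldots$ needed along the chain. When $w\in\mathcal F^*$, $T$ is bounded, so $w\sim w_{1/4}$ and the substitution of $w_{1/4}$ by $w$ is automatic.

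I expect the main obstacle to be establishing the pointwise Markov--Bernstein estimate with the sharp $T^{k/2}(x)$ factor in this generality, since the standard uniform version only produces the weaker $T^{k/2}(a_n)$. This should follow from splitting the argument on the Mhaskar--Rakhmanov interval $[-a_m,a_m]$ and its exterior and invoking the sharp pointwise Bernstein estimates available in the Levin--Lubinsky framework, but the bookkeeping needed to propagate the pointwise factor through the telescoping sum without picking up spurious powers of $T$, and to match the smoothness losses along the weight chain produced by Theorem~\ref{Theorem 2.2}, is the delicate part of the argument.
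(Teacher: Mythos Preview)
Your worry about the pointwise Markov--Bernstein inequality is misplaced: the estimate
\[
|P^{(k)}(x)\,w(x)|\le C\Big(\frac{n}{a_n}\Big)^{k}T^{k/2}(x)\,\|Pw\|_{L_\infty(\mathbb R)}
\]
is exactly Theorem~\ref{Theorem 3.9} applied to the weight $w_{-k/2}$, which lies in $\mathcal F_\lambda(C^{r+1}+)$ by Theorem~\ref{Theorem 2.2}. So that step is free.

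The genuine gap is in your claim that ``the summation is essentially geometric.'' After inserting $\|R_jw\|_\infty\le 2E_{m_j}(w,f)\le C(a_{m_j}/m_j)^kE_{m_j-k}(w_{1/4},f^{(k)})$ into the Markov--Bernstein bound, the factors $(m_{j+1}/a_{m_{j+1}})^k$ and $(a_{m_j}/m_j)^k$ cancel (using $a_{2m}\sim a_m$), and you are left with
\[
\big|(f^{(k)}-P_{n;f,w}^{(k)})(x)\,w(x)\big|\le C\,T^{k/2}(x)\sum_{j\ge 0}E_{m_j-k}\!\left(w_{1/4},f^{(k)}\right),\qquad m_j=2^jn.
\]
There is no residual geometric factor here; monotonicity of $E_m$ alone does not make $\sum_{j}E_{2^jn-k}$ comparable to $E_{n-k}$. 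If $E_m(w_{1/4},f^{(k)})$ decays like $1/\log m$ the series diverges. For $k<r$ you can push one more Jackson step to manufacture an extra $(a_{m_j}/m_j)^{r-k}$, but that only yields the \emph{second} inequality of the theorem, not the first, and for $k=r$ no further derivative is available and the argument breaks down entirely.

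The paper avoids this by a completely different mechanism: induction on $r$. One takes the best approximation $q_{n-1}$ of $f'$, applies the inductive hypothesis to $f'$, and sets $Q_n(x)=\int_0^x q_{n-1}$. The key observation is that the best approximation of $F:=f-Q_n$ coincides with $P_{n;f,w}-Q_n$, so it suffices to control the derivatives of the best approximation $R_n$ of $F$. This is done by comparing $R_n$ with an auxiliary polynomial $S_{2n}$ built from de la Vall\'ee Poussin means of $f'-q_{n-1}$; the crucial estimate $\|w(F-S_{2n})\|_\infty\le C(a_n/n)E_n(w_{1/4},f')$ comes from Lemma~\ref{Lemma 3.6}, which exploits the orthogonality $\int(f'-v_n(f'))Pw^2=0$ for $P\in\mathcal P_n$ together with an $L_1$-approximation of the characteristic function $w^{-2}\chi_{[0,x]}$. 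No infinite telescoping sum ever appears.
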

Applying Theorem \ref{Theorem 2.3} with $w$ or $w_{-1/4}$, we have the following corollary.
\begin{corollary}\label{Corollary 2.4}
{\rm (1)} Let $w=\exp(-Q)\in \mathcal{F}_\lambda(C^{r+2}+),   0<  \lambda<(r+2)/(r+1),   r\ge 0$.
We suppose that $f\in C^{(r)}(\mathbb{R})$ with
\begin{equation*}
 \lim_{|x|\rightarrow \infty}T^{1/4}(x)f^{(r)}(x)w(x)=0,
\end{equation*}
then for $0\le k\le r$ we have
\begin{eqnarray*}
\left\|\left(f^{(k)}-P_{n;f,w}^{(k)}\right)w_{-k/2}\right\|_{L_\infty(\mathbb{R})}
&\le& C_r E_{n-k}\left(w_{1/4},f^{(k)}\right) \\
&\le& C_r \left(\frac{a_n}{n}\right)^{r-k}E_{n-r}\left(w_{1/4},f^{(r)}\right).
\end{eqnarray*}
\item[\,\,\,(2)] Let $w=\exp(-Q)\in \mathcal{F}_\lambda(C^{r+3}+),   0<  \lambda<(r+3)/(r+2),   r\ge 0$.
We suppose that $f\in C^{(r)}(\mathbb{R})$  with
\begin{equation*}
 \lim_{|x|\rightarrow \infty}f^{(r)}(x)w(x)=0,
\end{equation*}
then for $0\le k\le r$ we have
\begin{eqnarray*}
\left\|\left(f^{(k)}-P_{n;f,w_{-1/4}}^{(k)}\right)w_{-(2k+1)/4}\right\|_{L_\infty(\mathbb{R})}
&\le& C_r E_{n-k}\left(w,f^{(k)}\right) \\
&\le& C_r \left(\frac{a_n}{n}\right)^{r-k}E_{n-r}\left(w,f^{(r)}\right).
\end{eqnarray*}
When $w\in\mathcal{F}^*$, we can replace $w_\alpha$ with $w$ in the above.
\end{corollary}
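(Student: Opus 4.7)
The strategy is to reduce both parts of the corollary to Theorem~\ref{Theorem 2.3} by exploiting the definitional relation $w_\alpha(x)\sim T_w^{\alpha}(x)w(x)$ and the transformation properties of the class $\mathcal{F}_\lambda(C^{s}+)$ recorded in Theorem~\ref{Theorem 2.2}. In both cases, no new estimate is really needed: everything boils down to rewriting the factor $T^{k/2}(x)$ on the right of Theorem~\ref{Theorem 2.3} as a shift of the weight on the left.

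For part (1), I would start from the pointwise estimate of Theorem~\ref{Theorem 2.3},
\begin{equation*}
\bigl|\bigl(f^{(k)}(x)-P_{n;f,w}^{(k)}(x)\bigr)w(x)\bigr|\le C_r T^{k/2}(x)E_{n-k}\bigl(w_{1/4},f^{(k)}\bigr),
\end{equation*}
divide through by $T^{k/2}(x)$, and invoke $w_{-k/2}(x)\sim T^{-k/2}(x)w(x)$ to absorb that factor into the weight on the left. Taking the supremum over $x\in\mathbb{R}$ yields the first inequality of (1); the second inequality is already part of the statement of Theorem~\ref{Theorem 2.3}.

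For part (2), I would play the same game, but with the underlying weight replaced by $v:=w_{-1/4}$. Before applying Theorem~\ref{Theorem 2.3} to $v$, I would use Theorem~\ref{Theorem 2.2} (with $\mu=-1/4$ and the other indices zero) to verify that $v\in\mathcal{F}_\lambda(C^{r+2}+)$; this step costs one degree of smoothness, which is precisely why the hypothesis is strengthened from $\mathcal{F}_\lambda(C^{r+2}+)$ to $\mathcal{F}_\lambda(C^{r+3}+)$. Theorem~\ref{Theorem 2.2} also gives $T_v\sim T_w$ and $a_n(v)\sim a_n(w)$, so the Mhaskar-type factor $(a_n/n)^{r-k}$ is unchanged. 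The decay hypothesis $\lim_{|x|\to\infty}f^{(r)}(x)w(x)=0$ translates into the required condition for $v$ because $T_v^{1/4}(x)v(x)\sim T_w^{1/4}(x)T_w^{-1/4}(x)w(x)=w(x)$. Applying Theorem~\ref{Theorem 2.3} to $v$ then gives
\begin{equation*}
\bigl|\bigl(f^{(k)}(x)-P_{n;f,v}^{(k)}(x)\bigr)v(x)\bigr|\le C_r T_v^{k/2}(x)E_{n-k}\bigl(v_{1/4},f^{(k)}\bigr),
\end{equation*}
and the identifications $v_{1/4}\sim T_w^{1/4}w_{-1/4}\sim w$ and $v(x)T_v^{-k/2}(x)\sim T_w^{-(2k+1)/4}(x)w(x)\sim w_{-(2k+1)/4}(x)$ turn this into exactly the desired inequality, with the second estimate again inherited from Theorem~\ref{Theorem 2.3}. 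In the Freud case $w\in\mathcal{F}^*$ the function $T$ is bounded, all shifts $w_\alpha$ are equivalent to $w$, and the final claim is immediate.

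The only genuinely nontrivial step is checking that $w_{-1/4}\in\mathcal{F}_\lambda(C^{r+2}+)$ when $w\in\mathcal{F}_\lambda(C^{r+3}+)$, together with the concordant equivalences $T_{w_{-1/4}}\sim T_w$, $a_n(w_{-1/4})\sim a_n(w)$, and $(w_{-1/4})_{1/4}\sim w$; all of these are direct consequences of Theorem~\ref{Theorem 2.2}, and this is the main bookkeeping obstacle. Once these equivalences are in hand, both parts of Corollary~\ref{Corollary 2.4} follow from Theorem~\ref{Theorem 2.3} by a purely algebraic manipulation of the $T^{\alpha}$ factors.
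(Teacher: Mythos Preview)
Your proposal is correct and follows exactly the route the paper intends: the paper's own proof is the single line ``It follows from Theorem~\ref{Theorem 2.3}'', and the sentence preceding the corollary already tells the reader to apply Theorem~\ref{Theorem 2.3} with $w$ for part~(1) and with $w_{-1/4}$ for part~(2). Your write-up simply spells out the bookkeeping (the loss of one order of smoothness via Theorem~\ref{Theorem 2.2}, the equivalences $T_{w_{-1/4}}\sim T_w$, $(w_{-1/4})_{1/4}\sim w$, and the translation of the decay hypothesis) that the paper leaves implicit.
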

\begin{corollary}\label{Corollary 2.5}
Let $r \ge 0$ be an integer. Let $w=\exp(-Q)\in \mathcal{F}_\lambda(C^{r+3}+)$,   $0< \lambda<(r+3)/(r+2)$,
and let $w_{(2r+1)/4}f^{(r)}\in L_\infty(\mathbb{R})$.
Then, for each $k   (0\le k\le r)$ and the best approximation polynomial $P_{n;f,w_{k/2}}$;
\begin{equation*}
 \left\|\left(f-P_{n;f,w_{k/2}}\right)w_{k/2}\right\|_{L_\infty(\mathbb{R})}=E_{n}\left(w_{k/2},f\right),
\end{equation*}
we have
\begin{eqnarray*}
\left\|\left(f^{(k)}-P_{n;f,w_{k/2}}^{(k)}\right)w\right\|_{L_\infty(\mathbb{R})}
&\le& C_r E_{n-k}\left(w_{(2k+1)/4},f^{(k)}\right)\\
&\le& C_r \left(\frac{a_n}{n}\right)^{r-k}E_{n-r}\left(w_{(2k+1)/4},f^{(r)}\right).
\end{eqnarray*}
When $w\in\mathcal{F}^*$, we can replace $w_{1/4}$ with $w$ in the above.
\end{corollary}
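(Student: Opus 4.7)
The plan is to reduce Corollary 2.5 to Theorem 2.3 applied with the weight $\tilde w := w_{k/2}$, so that the best-approximation polynomial $P_{n;f,w_{k/2}}$ is precisely $P_{n;f,\tilde w}$ in the notation of Theorem 2.3. Once the substitution is made, everything will be a matter of translating the conclusion back to $w$ using the equivalences supplied by Theorem 2.2.

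First I would verify that Theorem 2.3 is applicable with $\tilde w$ in place of $w$. By Theorem 2.2, the construction of $w_{k/2}$ costs one order of smoothness, so starting from $w \in \mathcal{F}_\lambda(C^{r+3}+)$ one has $\tilde w \in \mathcal{F}_\lambda(C^{r+2}+)$; the range $\lambda < (r+3)/(r+2)$ assumed in the corollary lies inside the range $\lambda < (r+2)/(r+1)$ required by Theorem 2.3. Theorem 2.2 also supplies $T_{\tilde w}(x) \sim T(x)$ and $a_n(\tilde w) \sim a_n$. The remaining hypothesis of Theorem 2.3, $\lim_{|x|\to\infty} T_{\tilde w}^{1/4}(x) f^{(r)}(x) \tilde w(x) = 0$, reduces via $T_{\tilde w}^{1/4} \tilde w \sim T^{1/4}\cdot T^{k/2} w = T^{(2k+1)/4} w \sim w_{(2k+1)/4}$ to the decay $\lim_{|x|\to\infty} w_{(2k+1)/4}(x) f^{(r)}(x) = 0$. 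Since $T(x) \ge \Lambda > 1$ and $k \le r$ we have $w_{(2k+1)/4} \lesssim w_{(2r+1)/4}$, so the assumed $w_{(2r+1)/4} f^{(r)} \in L_\infty(\mathbb{R})$, which under the paper's $p=\infty$ convention already includes vanishing at infinity, produces the needed limit.

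Next I would apply Theorem 2.3 to $f$ with weight $\tilde w$, obtaining
$$|(f^{(k)}(x) - P_{n;f,\tilde w}^{(k)}(x)) \tilde w(x)| \le C_r T_{\tilde w}^{k/2}(x) E_{n-k}(\tilde w_{1/4}, f^{(k)}).$$
Using $\tilde w(x) \sim T^{k/2}(x) w(x)$ on the left and $T_{\tilde w}^{k/2} \sim T^{k/2}$ on the right, the factor $T^{k/2}(x)$ cancels and yields
$$|(f^{(k)}(x) - P_{n;f,w_{k/2}}^{(k)}(x)) w(x)| \lesssim E_{n-k}(\tilde w_{1/4}, f^{(k)}).$$
Theorem 2.2 further gives $\tilde w_{1/4}(x) \sim T^{1/4}(x) \tilde w(x) \sim T^{(2k+1)/4}(x) w(x) \sim w_{(2k+1)/4}(x)$, so the right-hand side is equivalent to $E_{n-k}(w_{(2k+1)/4}, f^{(k)})$. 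Taking the supremum over $x \in \mathbb{R}$ delivers the first claimed inequality. The second inequality then follows by invoking the second bound in Theorem 2.3 for the weight $\tilde w$, equivalently the Jackson-type estimate $E_{n-k}(w_{(2k+1)/4}, f^{(k)}) \lesssim (a_n/n)^{r-k} E_{n-r}(w_{(2k+1)/4}, f^{(r)})$ embedded there, together with $a_n(\tilde w) \sim a_n$.

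When $w \in \mathcal{F}^*$, $T$ is bounded so every $w_\alpha \sim w$ and the subscripted weights $w_{k/2}$, $w_{(2k+1)/4}$ collapse to $w$, producing the Freud-type variant stated in the corollary. I expect the main obstacle to be the careful bookkeeping of the weight equivalences supplied by Theorem 2.2, in particular confirming that the one-order loss of smoothness under $w \mapsto w_{k/2}$ is exactly compensated by the hypothesis $w \in \mathcal{F}_\lambda(C^{r+3}+)$, and that the decay hypothesis on $f^{(r)}$ is sharp precisely because the largest index $k=r$ requires $w_{(2r+1)/4} f^{(r)}$ to vanish at infinity, which is what the assumption provides.
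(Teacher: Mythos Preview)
Your proof is correct and follows exactly the paper's approach: apply Theorem \ref{Theorem 2.3} with the weight $w_{k/2}$ in place of $w$ and then specialize the derivative index to $j=k$, so that the factor $T^{k/2}$ on each side cancels and $(w_{k/2})_{1/4}\sim w_{(2k+1)/4}$ yields the claimed bound. In fact you supply more hypothesis-checking (the smoothness drop $w_{k/2}\in\mathcal{F}_\lambda(C^{r+2}+)$, the equivalence $a_n(w_{k/2})\sim a_n$, and the reduction of the decay condition via $w_{(2k+1)/4}\lesssim w_{(2r+1)/4}$) than the paper's own proof, which leaves these verifications implicit.
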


\setcounter{equation}{0}
\section{Proof of Theorems}

      Throughout this section we suppose $w\in \mathcal{F}(C^2+)$.
We give the proofs of theorems. First, we give some lemmas to prove the theorems.
We construct the orthonormal polynomials $p_n(x)=p_n(w^2,x)$ of degree n for $w^2(x)$, that is,
\begin{equation*}
\int_{-\infty}^{\infty} p_n(w^2,x)p_m(w^2,x)w^2(x)dx=\delta_{mn}      (\textrm{Kronecker delta}).
\end{equation*}
Let $fw\in L_2(\mathbb{R})$. The Fourier-type series of $f$ is defined by
\begin{equation*}
\tilde{f}(x):=\sum_{k=0}^{\infty} a_k(w^2,f)p_k(w^2,x),
\quad  a_k(w^2,f):=\int_{-\infty}^{\infty} f(t)p_k(w^2,t)w^2(t)dt.
\end{equation*}
We denote the partial sum of $\tilde{f}(x)$ by
\begin{equation*}
 s_n(f,x):=s_n(w^2,f,x):=\sum_{k=0}^{n-1} a_k(w^2,f)p_k(w^2,x).
\end{equation*}
Moreover, we define the de la Vall$\acute{\textrm{e}}$e Poussin means by
\begin{equation*}
 v_n(f,x):=\frac{1}{n}\sum_{j=n+1}^{2n}s_j(w^2,f,x).
\end{equation*}

\begin{proposition}[{\cite[Theorem 1.1 (1,5), Corollary 6.2 (6.5)]{[11]}}]\label{Proposition 3.1}
Let $w\in\mathcal{F}_\lambda(C^3+),   0<\lambda<3/2$, and let $1\le p\le \infty$.
When $T^{1/4}wf\in L_p(\mathbb{R})$, we have
\begin{equation*}
\left\|v_n(f)w\right\|_{L_p(\mathbb{R})}\le C \left\|T^{1/4}wf \right\|_{L_p(\mathbb{R})},
\end{equation*}
and so
\begin{equation*}
 \left\|(f-v_n(f))w \right\|_{L_p(\mathbb{R})}\le C E_{p,n}\left(T^{1/4}w,f\right).
\end{equation*}
So, equivalently,
\begin{equation*}
\left\|v_n(f)w\right\|_{L_p(\mathbb{R})}\le C \left\|w_{1/4}f\right\|_{L_p(\mathbb{R})},
\end{equation*}
and so
\begin{equation}\label{3.1}
 \left\|(f-v_n(f))w\right\|_{L_p(\mathbb{R})}\le C E_{p,n} \left(w_{1/4},f\right).
\end{equation}
When $w\in\mathcal{F}^*$, we can replace $w_{1/4}$ with $w$.
\end{proposition}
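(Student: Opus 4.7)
The plan is to establish the weighted $L_p$ boundedness of the de la Vallée Poussin operator first, then derive the Jackson-type inequality by polynomial reproduction, and finally reformulate using Theorem 2.2.

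First, I would express $v_n(f)$ as an integral operator
$$v_n(f,x) = \int_{\mathbb{R}} K_n^{vp}(x,t)\, f(t)\, w^2(t)\, dt,$$
where, after interchanging the two summations in the definition of $v_n$, the kernel takes the form
$$K_n^{vp}(x,t) = \sum_{k=0}^{n} p_k(x) p_k(t) + \frac{1}{n}\sum_{k=n+1}^{2n-1} (2n-k)\, p_k(x) p_k(t).$$
Applying the Christoffel--Darboux formula together with sharp $L_\infty$ bounds of the form $|p_k(x) w(x)| \le C\, T^{1/4}(x)\, a_k^{-1/2}$ on the Mhaskar--Rakhmanov--Saff interval $[-a_k, a_k]$, plus the standard exponential decay outside, reduces the boundedness $\|v_n(f) w\|_{L_p(\mathbb{R})} \le C\|T^{1/4} w f\|_{L_p(\mathbb{R})}$ to a kernel estimate accessible via Schur's test in the weighted space. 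The factor $T^{1/4}$ is forced by the endpoint behaviour of $|p_n w|$ near $\pm a_n$ for Erdős-type weights and cannot in general be removed.

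Next, I would use polynomial reproduction. Since $s_j$ projects onto $\mathcal{P}_{j-1}$, we have $s_j(P) = P$ for every $P \in \mathcal{P}_n$ and every $j \ge n+1$, hence $v_n(P) = P$ on all of $\mathcal{P}_n$. Given any such $P$, linearity of $v_n$ gives
$$\|(f - v_n(f))w\|_{L_p(\mathbb{R})} = \|\bigl((f-P) - v_n(f-P)\bigr) w\|_{L_p(\mathbb{R})},$$
and combining the triangle inequality with the operator bound of the previous paragraph yields
$$\|(f - v_n(f))w\|_{L_p(\mathbb{R})} \le \|(f-P)w\|_{L_p(\mathbb{R})} + C\|T^{1/4} w (f-P)\|_{L_p(\mathbb{R})} \le (1+C)\|T^{1/4} w (f-P)\|_{L_p(\mathbb{R})},$$
since $T \ge \Lambda > 1$ by Definition 1.1(d). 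Taking the infimum over $P \in \mathcal{P}_n$ produces the Jackson estimate $\|(f - v_n(f))w\|_{L_p(\mathbb{R})} \le C\, E_{p,n}(T^{1/4}w, f)$.

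To pass to the $w_{1/4}$ formulation, I would invoke Theorem 2.2 with $\mu = 1/4$ and $\nu = \alpha = \beta = 0$, producing a weight $w_{1/4} \in \mathcal{F}(C^2+)$ satisfying $w_{1/4}(x) \sim T^{1/4}(x) w(x)$; substituting into both displays gives the equivalent estimates, and when $w \in \mathcal{F}^*$ the function $T$ is bounded so the substitution is trivial. The main obstacle is the operator bound $\|v_n(f) w\|_{L_p(\mathbb{R})} \le C\|T^{1/4} w f\|_{L_p(\mathbb{R})}$: it rests on a delicate interplay of orthonormal polynomial asymptotics for weights in $\mathcal{F}_\lambda(C^3+)$, Christoffel function estimates $\lambda_n(x) \sim (a_n/n)\, T^{-1/2}(x)\, w^2(x)$, and Marcinkiewicz--Zygmund inequalities, and it does not follow from any soft functional-analytic argument. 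These are precisely the technical ingredients developed in the cited reference [11].
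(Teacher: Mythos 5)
The paper gives no proof of Proposition 3.1 --- it is imported verbatim from reference [11] --- so the only question is whether your reconstruction is sound, and it is: the interchanged-sum kernel form of $v_n$, the reproduction property $v_n(P)=P$ for $P\in\mathcal{P}_n$ (since $s_j(P)=P$ for $j\ge n+1$), the triangle-inequality reduction of the Jackson estimate to the operator bound (using $T\ge\Lambda>1$ to absorb the unweighted term before taking the infimum), and the passage to $w_{1/4}\sim T^{1/4}w$ via Theorem 2.2 are all correct. You rightly isolate the operator bound $\left\|v_n(f)w\right\|_{L_p(\mathbb{R})}\le C\left\|T^{1/4}wf\right\|_{L_p(\mathbb{R})}$ as the one genuinely hard ingredient, resting on orthonormal-polynomial and Christoffel-function estimates, and defer it to the cited reference --- which is exactly what the paper itself does.
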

\begin{lemma}\label{Lemma 3.2}
{\rm (1)} {\rm ({\cite[Lemma 3.5, (a)]{[8]}})}
Let $L>0$ be fixed. Then, uniformly for $t>0$,
\begin{equation*}
 a_{Lt}\sim a_t.
\end{equation*}
\item[\,\,\,(2)]  {\rm (\cite[Lemma 3.4 (3.17)]{[8]})}
For $x >1$, we have
\begin{equation*}
 |Q'(a_x)| \sim \frac{x \sqrt{T(a_x)}}{a_x} \quad \mbox{and} \quad |Q(a_x)| \sim \frac{x}{ \sqrt{T(a_x)}}.
\end{equation*}
\item[\,\,\,(3)]  {\rm (\cite[Lemma 3.2 (3.8)]{[8]})}
Let $x\in(0,   \infty)$. There exists $0<\varepsilon<1$ such that
\begin{equation*}
T\left(x\left[1+\frac{\varepsilon}{T(x)}\right]\right)\sim T(x).
\end{equation*}
\item[\,\,\,(4)] {\rm (\cite[Proposition 3]{[9]})}
If $T(x)$   is   unbounded,   then   for   any   $\eta>0$   there   exists   $C(\eta)>0$   such   that   for   $t\ge1$,
\begin{equation*}
 a_t\le C(\eta)t^\eta.
\end{equation*}
\end{lemma}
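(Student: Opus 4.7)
The plan is to treat Lemma 3.2 as a package of standard facts about the Mhaskar--Rakhmanov--Saff numbers $a_x$ associated with a weight $w\in\mathcal{F}(C^2+)$. Each of the four assertions is taken verbatim from the cited sources, so the quickest route is simply to invoke those references. For a self-contained derivation I would anchor everything on the defining equation
$$x=\frac{2}{\pi}\int_0^1\frac{a_xuQ'(a_xu)}{\sqrt{1-u^2}}\,du,$$
together with the structural conditions (a)--(e) of Definition 1.1.

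The central step, which I would prove first, is part (2). Using $T(y)=yQ'(y)/Q(y)$ and condition (e), one analyses the integrand near $u=1$: the quasi-increasing character of $T$ concentrates the mass in a window of width roughly $1/T(a_x)$ around $u=1$, and a Laplace-type estimate yields
$$x\sim\frac{a_x|Q'(a_x)|}{\sqrt{T(a_x)}}\sim\sqrt{T(a_x)}\,Q(a_x),$$
from which both relations in (2) follow. This is where I expect the real work to sit; the endpoint asymptotics genuinely require condition (e) to control $Q''/|Q'|$, and this is the main obstacle in any first-principles proof.

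Parts (1) and (3) are then perturbation corollaries of (2). For (1) I would write the defining equation at $t$ and at $Lt$, divide, and use that the integral depends on $a$ only through $aQ'(a)$; condition (e) converts a bounded ratio of these integrals into a bounded ratio of $a_{Lt}/a_t$, with constants depending only on $L$. For (3) I would expand $T(x(1+\varepsilon/T(x)))$ to first order, bound the correction using the growth condition $|Q'(x)|\le CQ^\lambda(x)$ and the quasi-monotonicity of $T$, and choose $\varepsilon$ small so that the perturbation multiplies $T(x)$ by a factor in a fixed bounded range.

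Part (4) is a direct consequence of (2): if $T$ is unbounded, then $Q(a_x)\sim x/\sqrt{T(a_x)}=o(x)$, so $a_x$ cannot grow as fast as any power $x^\eta$ with $\eta>0$ would force through (e). Altogether, assuming (2), the remaining parts are essentially algebra on the defining equation; the only genuinely hard piece is the asymptotic identification of $x$ with $a_x|Q'(a_x)|/\sqrt{T(a_x)}$, and in the actual paper this is bypassed by citing \cite{[8]} and \cite{[9]}.
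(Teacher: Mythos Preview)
Your proposal is correct and aligns with the paper: Lemma~3.2 carries no proof in the text beyond the bracketed citations to \cite{[8]} and \cite{[9]}, so your opening sentence---that the quickest route is simply to invoke those references---is exactly what the paper does.

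Your supplementary first-principles sketch goes beyond anything in the paper, so there is nothing to compare it against. Two small cautions if you were to flesh it out: in your outline for part~(3) you invoke the bound $|Q'(x)|\le CQ^\lambda(x)$, but that hypothesis belongs to Definition~2.1 ($\mathcal{F}_\lambda(C^{r+2}+)$), whereas Lemma~3.2 is stated only under $w\in\mathcal{F}(C^2+)$; the argument in \cite{[8]} for (3) uses only conditions (a)--(e). Likewise, your derivation of part~(4) from part~(2) is too compressed: $Q(a_x)=o(x)$ together with $Q(y)\ge cy^\Lambda$ gives $a_x=o(x^{1/\Lambda})$, not $a_x\le C(\eta)x^\eta$ for \emph{every} $\eta>0$; the full statement needs the unboundedness of $T$ to be fed back in more carefully (as in \cite{[9]}).
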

To prove the results, we need the following notations.
We set
\begin{equation*}
\sigma(t):=\inf \left\{a_u: \,\, \frac{a_u}{u}\le t \right\}, \quad     t>0,
\end{equation*}
and
\begin{equation*}
\Phi_t(x):=\sqrt{\left|1-\frac{|x|}{\sigma(t)}\right|}+T^{-1/2}(\sigma(t)), \quad x\in\mathbb{R}.
\end{equation*}
Define for $fw\in L_p(\mathbb{R})$,  $0<p\le \infty$,
\begin{eqnarray*}
\omega_p(f,w,t)&:=&\sup_{0<h\le t}
\left\|w(x)\left\{f\left(x+\frac{h}{2}\Phi_t(x)\right)-
f\left(x-\frac{h}{2}\Phi_t(x)\right)\right\}\right\|_{L_p(|x|\le \sigma(2t))}\\
&& \quad +\inf_{c\in\mathbb{R}}\left\|w(x)(f-c)(x)\right\|_{L_p(|x|\ge \sigma(4t))}
\end{eqnarray*}
(see \cite{[2],[3]}).
\begin{proposition}[cf.{\cite[Theorem 1.2]{[3]}, \cite[Corollary 1.4]{[2]}}]\label{Proposition 3.3}
Let $w\in \mathcal{F}(C^2+)$.
Let $0<p\le \infty$.
Then for $f:   \mathbb{R}\rightarrow \mathbb{R}$ which $fw\in L_p(\mathbb{R})$
(and for $p=\infty$, we require $f$ to be continuous, and $fw$ to vanish at $\pm \infty$), we have for $n\ge C_3$,
\begin{equation*}
 E_{p,n}\left(f,w\right)\le C_1\omega_{p}\left(f,w,C_2\frac{a_n}{n}\right),
\end{equation*}
where $C_j$,   $j=1,2,3$, do not depend on $f$ and $n$.
\end{proposition}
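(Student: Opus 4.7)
The plan is to deduce this Jackson-type inequality by constructing an explicit near-best polynomial approximation via de la Vall\'{e}e Poussin means and measuring the error in terms of the weighted modulus $\omega_p$. The Mhaskar-Rakhmanov-Saff scale enters through the choice $t = C_2 a_n/n$, which by Lemma 3.2 (1) and the definition of $\sigma$ makes $\sigma(t) \sim a_n$; this matches the natural ``support'' for polynomials of degree $n$ under the weight $w$.

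First, I would introduce the first-order K-functional
\begin{equation*}
K_p(f, w, t) := \inf_g \bigl\{ \|(f-g)w\|_{L_p(\mathbb{R})} + t \, \|\Phi_t \, g' \, w\|_{L_p(\mathbb{R})} \bigr\},
\end{equation*}
where the infimum runs over locally absolutely continuous $g$ for which the norms are finite, and establish the equivalence $\omega_p(f, w, t) \sim K_p(f, w, t)$ in the spirit of Ditzian-Totik. The direction $\omega_p \lesssim K_p$ is a routine mean-value plus Hardy-inequality computation. The harder direction $K_p \lesssim \omega_p$ requires constructing a smoothing $g$ (e.g., by a Steklov-type average adapted to the step-weight $\Phi_t$) whose derivative is controlled by local first differences of $f$; the tail piece of $K_p$ is then matched by the $\inf_c$ term of $\omega_p$, exploiting that $w$ decays fast enough outside $\sigma(4t)$ to make subtraction of a constant essentially free.

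Second, given the K-functional, I would construct $P_n$ by taking $g$ almost achieving the infimum in $K_p(f, w, a_n/n)$ and setting $P_n := v_n(g)$, where $v_n$ is a de la Vall\'{e}e Poussin-type operator as in Proposition 3.1 (working only at the $\mathcal{F}(C^2+)$ level of regularity forces use of a more elementary variant, but such operators are available in the Levin-Lubinsky framework). Then
\begin{equation*}
\|(f - P_n)w\|_{L_p(\mathbb{R})} \le \|(f - g)w\|_{L_p(\mathbb{R})} + \|(g - v_n(g))w\|_{L_p(\mathbb{R})},
\end{equation*}
the first term being the first piece of $K_p$. For the second, the $L_p$-boundedness of $v_n$ combined with an integration-by-parts / Hardy-type bound on $\mathbb{R}$ yields $\|(g - v_n(g))w\|_{L_p(\mathbb{R})} \lesssim (a_n/n) \|\Phi_{a_n/n} \, g' \, w\|_{L_p(\mathbb{R})}$, which is precisely the second piece of $K_p(f, w, a_n/n)$.

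The main obstacle will be the equivalence $K_p \sim \omega_p$, specifically the direction $K_p \lesssim \omega_p$: one must manufacture a smooth surrogate $g$ whose $\Phi_t$-weighted derivative is controlled by the local first-difference modulus, while handling the tail region in tandem with the $\inf_c$ term; here the interaction between the varying step-weight $\Phi_t$ and the rapid decay of $w$ requires careful bookkeeping. Once that equivalence is in hand, the Jackson bound $E_{p,n}(f, w) \le C \, K_p(f, w, a_n/n) \le C_1 \, \omega_p(f, w, C_2 a_n/n)$ follows cleanly from the polynomial construction above.
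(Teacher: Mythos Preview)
Your plan is to prove the Jackson inequality essentially from scratch, but the paper does something far simpler: it observes that this exact inequality is already proved by Damelin--Lubinsky \cite{[3]} and Damelin \cite{[2]} for a weight class $\mathcal{E}_1$ defined by conditions (a)--(d) of Definition~\ref{Definition 1.1} together with the growth condition
\[
\frac{yQ'(y)}{xQ'(x)}\le C_1\left(\frac{Q(y)}{Q(x)}\right)^{C_2},\qquad y\ge x\ge C_3,
\]
and then checks in three lines that $\mathcal{F}(C^2+)\subset\mathcal{E}_1$. Concretely, integrating the inequality $Q''/Q'\le C\,Q'/Q$ from (e) gives $Q'(y)/Q'(x)\le (Q(y)/Q(x))^{C}$, while integrating $1/t\le (1/\Lambda)\,Q'(t)/Q(t)$ from (d) gives $y/x\le (Q(y)/Q(x))^{1/\Lambda}$; multiplying these yields the needed growth bound with $C_2=C+1/\Lambda$. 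That is the entire proof.

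Your outline is therefore doing far more than required: you are re-deriving the content of \cite{[2],[3]} rather than invoking it. As a self-contained programme it is broadly the right architecture (K-functional equivalence, then polynomial realisation), but as written it has real gaps. The equivalence $K_p\sim\omega_p$ you flag as ``the main obstacle'' is indeed a substantial theorem in this setting and cannot be dismissed as routine; and your polynomial step leans on a de la Vall\'ee Poussin operator with $L_p$-boundedness under only $\mathcal{F}(C^2+)$ hypotheses, whereas Proposition~\ref{Proposition 3.1} needs $\mathcal{F}_\lambda(C^3+)$---your parenthetical that ``such operators are available in the Levin--Lubinsky framework'' is not a substitute for producing one. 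For the purposes of this paper, the short class-inclusion argument is both correct and complete; your approach, if fully carried out, would amount to an independent proof of the cited Jackson theorems.
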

\begin{proof}
Damelin and Lubinsky \cite{[3]} or Damelin \cite{[2]} have treated a certain class $\mathcal{E}_1$ of weights
containing the conditions (a)-(d) in Definition \ref{Definition 1.1} and
\begin{equation}\label{3.2}
\frac{yQ'(y)}{xQ'(x)}\le C_1 \left(\frac{Q(y)}{Q(x)}\right)^{C_2},
\quad  y\ge x\ge C_3,
\end{equation}
where $C_i$,   $i=1,2,3>0$ are some constants, and they obtain this Proposition
for $w\in \mathcal{E}_1$. Therefore, we may show $\mathcal{F}(C^2+)\subset\mathcal{E}_1$.
In fact, from Definition \ref{Definition 1.1} (d) and (e), we have for $y\ge x>0$,
\begin{equation*}
 \frac{Q'(y)}{Q'(x)}=\exp \left(\int_x^y\frac{Q''(t)}{Q'(t)}dt\right)
 \le\exp \left(C_3\int_x^y\frac{Q'(t)}{Q(t)}dt\right)=\left(\frac{Q(y)}{Q(x)}\right)^{C_3},
\end{equation*}
and
\begin{equation*}
\frac{y}{x}=\exp \left(\int_x^y\frac{1}{t}dt\right)
\le \exp \left(\frac{1}{\Lambda}\int_x^y\frac{Q'(t)}{Q(t)}dt\right)
=\left(\frac{Q(y)}{Q(x)}\right)^{\frac{1}{\Lambda}}.
\end{equation*}
Therefore, we obtain (\ref{3.2}) with $C_2=C_3+\frac{1}{\Lambda}$,
that is, we see $\mathcal{F}(C^2+)\subset\mathcal{E}_1$.
\end{proof}

\begin{theorem}\label{Theorem 3.4}
Let $w\in \mathcal{F}(C^2+)$.
{\rm (1)} If $f$ is a function having bounded variation on any compact interval and if\begin{equation*}
 \int_{-\infty}^{\infty} w(x)|df(x)|<\infty,
\end{equation*}
then there exists a constant $C>0$ such that for every $t>0$,
\begin{equation*}
\omega_1(f,w,t)\le C t\int_{-\infty}^{\infty} w(x)|df(x)|,
\end{equation*}
and so
\begin{equation*}
 E_{1,n}(f,w)\le C\frac{a_n}{n}\int_{-\infty}^{\infty} w(x)|df(x)|.
\end{equation*}
\item[\,\,\,(2)] Let us suppose that $f$ is continuous and $\lim_{|x|\rightarrow \infty}|(\sqrt{T}wf)(x)|=0$, then we have
\begin{equation*}
 \lim_{t\rightarrow 0}\omega_{\infty}(f,w,t)=0.
\end{equation*}
\end{theorem}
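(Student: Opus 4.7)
The second inequality in (1) follows immediately from the first by applying Proposition \ref{Proposition 3.3} at $t = C_{2}a_{n}/n$ and using that $\omega_{1}$ is dominated by the same quantity. So the real content in (1) is the modulus bound, and my plan is to handle the two pieces of $\omega_{1}$ separately. For the main part, I write
\begin{equation*}
f\!\left(x+\tfrac{h}{2}\Phi_{t}(x)\right)-f\!\left(x-\tfrac{h}{2}\Phi_{t}(x)\right)
=\int_{x-h\Phi_{t}(x)/2}^{x+h\Phi_{t}(x)/2} df(y),
\end{equation*}
multiply by $w(x)$, integrate over $|x|\le\sigma(2t)$, and swap orders by Fubini. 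For fixed $y$ the resulting set of $x$'s has Lebesgue measure at most $h\sup\Phi_{t}\le 2h$, and on this set $w(x)\sim w(y)$ thanks to Lemma \ref{Lemma 3.2}(3) (applied to the shift, whose relative size is controlled by $1/T$). This yields $\le Ch\int w\,|df|\le Ct\int w\,|df|$.

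For the tail piece I take $c=f(0)$, so that $|f(x)-c|\le\int_{0}^{|x|}|df(s)|$. Fubini then gives
\begin{equation*}
\int_{|x|\ge\sigma(4t)}w(x)|f(x)-f(0)|\,dx
\le \int|df(s)|\int_{|x|\ge\max(|s|,\sigma(4t))}w(x)\,dx.
\end{equation*}
Using the standard tail estimate $\int_{M}^{\infty}w\le Cw(M)/|Q'(M)|$ (integration by parts together with Definition \ref{Definition 1.1}(e)) and the fact that $|Q'|$ is increasing on $(0,\infty)$, the inner weight integral is majorized by $Cw(s)/|Q'(\sigma(4t))|$ for all $s\ge 0$. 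Lemma \ref{Lemma 3.2}(2) then converts the prefactor, since $|Q'(\sigma(4t))|^{-1}\sim 4t/\sqrt{T(\sigma(4t))}\le Ct$, giving exactly $Ct\int w\,|df|$.

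For (2) I use the same decomposition. For the tail the choice $c=0$ suffices: on $|x|\ge\sigma(4t)$ we have $|wf|\le|\sqrt{T}\,wf|$ (because $T\ge\Lambda>1$), and the right-hand side tends to $0$ by hypothesis as $\sigma(4t)\to\infty$, that is, as $t\to 0$. For the main part, fix $\varepsilon>0$ and split $|x|\le\sigma(2t)$ at a large $R=R(\varepsilon)$: on $|x|\le R$, $f$ is uniformly continuous on a compact set and $w\le 1$, so the difference is small for small $h$; on $R\le|x|\le\sigma(2t)$, the regularity $w(x)/w(x\pm h\Phi_{t}(x)/2)\le C$ lets me dominate $w(x)|f(x\pm h\Phi_{t}(x)/2)|$ by $g(x\pm\delta)/\sqrt{T(x\pm\delta)}$ with $g=\sqrt{T}\,w|f|$, and the hypothesis $g(y)\to 0$ then makes this term uniformly small.

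The main technical obstacle in both parts is the regularity estimate $w(x)\sim w(x\pm h\Phi_{t}(x)/2)$ on the range $|x|\le\sigma(2t)$, $0<h\le t$. The worst case occurs near $|x|=\sigma(t)$, where $h\Phi_{t}(x)/2$ is of order $1/|Q'(\sigma(t))|$; Lemma \ref{Lemma 3.2}(3), with an appropriate choice of $\varepsilon$, is precisely the device that upgrades such a shift into a bounded multiplicative change in $w$. The remaining steps are essentially bookkeeping with the Mhaskar-Rakhmanov-Saff numbers and Lemma \ref{Lemma 3.2}(1),(2).
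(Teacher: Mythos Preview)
Your argument is correct and tracks the paper's proof closely. The only substantive difference is in the tail piece of part~(1): the paper writes
\[
\inf_{c}\bigl\|w(f-c)\bigr\|_{L_{1}(|x|\ge\sigma(4t))}
\le \frac{1}{Q'(\sigma(4t))}\,\bigl\|Q'\,w\,(f-f(0))\bigr\|_{L_{1}(\mathbb{R})}
\]
and then shows $\|Q'w(f-f(0))\|_{L_{1}}\le\int w\,|df|$ by an integration by parts, whereas you apply Fubini first and then the pointwise tail bound $\int_{M}^{\infty}w\le w(M)/Q'(M)$. Both routes feed into the same relation $1/Q'(\sigma(4t))\sim t/\sqrt{T(\sigma(4t))}\le Ct$ (your Lemma~\ref{Lemma 3.2}(2) is the paper's (\ref{3.5})); your version is slightly more direct, while the paper's inequality $\|Q'wg\|_{L_{1}}\le\int w\,|df|$ is reusable (it reappears as Lemma~\ref{Lemma 4.4} in the $L_{p}$ section).

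One citation to fix: the weight comparison $w(x)\sim w(x\pm h\Phi_{t}(x)/2)$ on $|x|\le\sigma(2t)$ is not a consequence of Lemma~\ref{Lemma 3.2}(3), which controls only ratios of $T$. The paper packages this estimate as Lemma~\ref{Lemma 3.5}(2) (quoted from \cite{[9]}) and uses it in both parts; Lemma~\ref{Lemma 3.2}(3) enters only in part~(2), to stabilise the $\sqrt{T}$ factors after you insert them. With that adjustment your proof of (2) matches the paper's split at a large level $L$ and its use of uniform continuity on $|x|\le 2L$.
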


To prove this theorem we need the following lemma.
\begin{lemma}[{\cite[Lemma 7]{[9]}}]\label{Lemma 3.5}
{\rm (1)} For $t>0$ there exists $a_u$ such that
\begin{equation*}
 t=\frac{a_u}{u}  \quad  \textrm{and} \quad   \sigma(t)=a_u.
\end{equation*}
\item[\,\,\,(2)] If $t=a_u/u$, $u>0$ large enough and
\begin{equation*}
 |x-y|\le t\Phi_t(x),
\end{equation*}
then there exist $C_1,C_2>0$ such that
\begin{equation*}
 C_1w(x)\le w(y)\le C_2 w(x).
\end{equation*}
\end{lemma}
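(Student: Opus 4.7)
The plan is to establish parts (1) and (2) separately, both drawing on the asymptotics collected in Lemma 3.2.

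For part (1), I would study the auxiliary function $g(u) := a_u/u$ on $(0,\infty)$. Implicit differentiation of the defining equation $u = (2/\pi)\int_0^1 [a_u s Q'(a_u s)]/\sqrt{1-s^2}\,ds$ shows that $u \mapsto a_u$ is continuous and strictly increasing, so $g$ is continuous. The hypothesis $T \ge \Lambda > 1$ forces $Q(x) \gtrsim |x|^\Lambda$ for large $x$; combined with $Q(a_u) \sim u/\sqrt{T(a_u)}$ from Lemma 3.2 (2), this gives $u \gtrsim a_u^\Lambda$, hence $g(u) \lesssim u^{1/\Lambda - 1} \to 0$ as $u \to \infty$. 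A similar analysis near $u = 0$, using $Q'(x) \to 0$ as $x \to 0$, gives $g(u) \to \infty$ as $u \to 0^+$. The intermediate value theorem then produces $u$ with $g(u) = t$ for each $t > 0$. To identify $\sigma(t)$ with $a_u$, I would select the largest such $u$; using that $g$ is eventually strictly decreasing under our hypotheses, $\{v : a_v/v \le t\} = [u,\infty)$, and since $a_v$ is increasing in $v$, $\sigma(t) = \inf_{v \ge u} a_v = a_u$.

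For part (2), I would apply the mean value theorem to $Q$: $\log(w(y)/w(x)) = Q(x) - Q(y) = -(y-x)Q'(\xi)$ for some $\xi$ between $x$ and $y$, so the conclusion $w(x) \sim w(y)$ reduces to a uniform bound $|y-x||Q'(\xi)| \le C$. Given $|x-y| \le t\Phi_t(x) = (a_u/u)\left(\sqrt{\left|1-|x|/a_u\right|} + T^{-1/2}(a_u)\right)$, I would split into cases according to where $|x|$ sits relative to $a_u$. For $|x| \le a_u/2$, the estimate is routine because $|Q'(\xi)|$ is dominated by $|Q'(a_u)|$ and the factor $t \le a_u/u$ is small enough. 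For $|x|$ near $a_u$, Lemma 3.2 (3) guarantees $T(\xi) \sim T(a_u)$ along $[x,y]$ and hence $|Q'(\xi)| \sim |Q'(a_u)|$; then combining with $|Q'(a_u)| \sim (u/a_u)\sqrt{T(a_u)}$ from Lemma 3.2 (2) and the $T^{-1/2}(a_u)$ term of $\Phi_t$ yields the desired bounded product. For $|x| > a_u$, the term $\sqrt{|x|/a_u - 1}$ compensates for the additional growth of $|Q'(\xi)|$.

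The main obstacle is the transition region $|x| \approx a_u$, where both pieces of $\Phi_t(x)$ compete and one must invoke Lemma 3.2 (3) repeatedly to keep $T$ (and thus $|Q'|$) stable along the whole interval $[x,y]$, so that the mean-value estimate closes uniformly. A subtler issue in part (1) is verifying that $g(u) = a_u/u$ is monotone on the range needed to identify $\sigma(t)$ with the selected $a_u$; the hypothesis $T \ge \Lambda > 1$ is crucial here, since without it $g$ could oscillate and the set $\{v : a_v/v \le t\}$ would have a more complicated structure.
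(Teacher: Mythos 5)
This lemma is not proved in the paper at all: it is imported verbatim from \cite[Lemma 7]{[9]}, so there is no internal proof to compare against, and your proposal has to be judged on its own terms. Part (1) is essentially right, but your identification of $\sigma(t)$ is needlessly fragile: you take the \emph{largest} root of $a_u/u=t$ and then must prove that $\{v: a_v/v\le t\}=[u,\infty)$, which is exactly the monotonicity you admit you cannot easily establish. Take instead $u^*:=\inf\{v: a_v/v\le t\}$. Continuity of $v\mapsto a_v/v$ gives $a_{u^*}/u^*=t$ (it is $\le t$ as a limit from inside the set and $\ge t$ as a limit from outside), and since $v\mapsto a_v$ is increasing, every $v$ in the set satisfies $a_v\ge a_{u^*}$ while a sequence $v_n\downarrow u^*$ gives $a_{v_n}\to a_{u^*}$; hence $\sigma(t)=a_{u^*}$ with no monotonicity of $a_v/v$ needed.

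Part (2) has two genuine gaps. First, the case $|x|\le a_u/2$ is not ``routine'' with the bound $|Q'(\xi)|\le |Q'(a_u)|$: there $\Phi_t(x)\sim 1$, so your mean-value estimate gives $|Q(x)-Q(y)|\lesssim \frac{a_u}{u}|Q'(a_u)|\sim\sqrt{T(a_u)}$ by Lemma \ref{Lemma 3.2}(2), which is unbounded for Erd\H{o}s-type weights. What is actually needed is the pointwise Levin--Lubinsky bound $|Q'(\xi)|\le C\frac{u}{a_u}\bigl(\sqrt{|1-|\xi|/a_u|}+T^{-1/2}(a_u)\bigr)^{-1}$ for $|\xi|\le a_u$, together with the stability $\Phi_t(\xi)\sim\Phi_t(x)$ along $[x,y]$; only then does the product $t\,\Phi_t(x)\,|Q'(\xi)|$ close uniformly in all regimes, including the transition region. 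Second, your claim that for $|x|>a_u$ the factor $\sqrt{|x|/a_u-1}$ ``compensates'' for the growth of $Q'$ is false: already for $w=e^{-x^2}$ one has $a_u=\sqrt u$, and taking $x=s a_u$, $y=x+t\Phi_t(x)$ gives $Q(y)-Q(x)\approx 2s\bigl(\sqrt{s-1}+T^{-1/2}\bigr)\to\infty$ as $s\to\infty$, so $w(y)/w(x)\to 0$ and no uniform constants $C_1,C_2$ exist over all of $\mathbb{R}$. The conclusion only holds for $|x|\lesssim\sigma(t)$ (and indeed the paper only ever applies it for $|x|\le\sigma(2t)<\sigma(t)=a_u$); a correct proof must either impose that restriction or prove the statement only on that range.
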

\begin{proof}[Proof of Theorem \ref{Theorem 3.4}]
(1) Let $g(x):=f(x)-f(0)$. For $t>0$ small enough let $0<h\le t$ and  $|x|\le \sigma(2t)<\sigma(t)$.
Hence we may consider $\Phi_t(x)\le 2$.
Then by Lemma \ref{Lemma 3.5},
\begin{eqnarray*}
&&\int_{-\infty}^{\infty} w(x)\left|g\left(x+\frac{h}{2}\Phi_t(x)\right)-g\left(x-\frac{h}{2}\Phi_t(x)\right)\right|dx\\
&&=\int_{-\infty}^{\infty} w(x)\left|\int_{x-\frac{h}{2}\Phi_t(x)}^{x+\frac{h}{2}\Phi_t(x)}df(v)\right| dx
\le C\int_{-\infty}^{\infty} \left|\int_{x-\frac{h}{2}\Phi_t(x)}^{x+\frac{h}{2}\Phi_t(x)}w(v)df(v)\right| dx\\
&&\le\int_{-\infty}^{\infty} \int_{x-h}^{x+h} w(v)|df(v)|dx
\le\int_{-\infty}^{\infty} w(v)\int_{v-h\le x\le v+h} dx|df(v)|\\
&&\le 2h\int_{-\infty}^{\infty} w(v)|df(v)|.
\end{eqnarray*}
Hence we have
\begin{equation}\label{3.3}
\int_{-\infty}^{\infty} w(x)\left|g\left(x+\frac{h}{2}\Phi_t(x)\right)
-g\left(x-\frac{h}{2}\Phi_t(x)\right)\right| dx
\le 2t \int_{-\infty}^{\infty} w(x)|df(x)|.
\end{equation}
Moreover, we see
\begin{equation}\label{3.4}
\inf_{c\in\mathbb{R}}\left\|w(x)(f-c)(x)\right\|_{L_1(|x|\ge \sigma(4t))}
\le \frac{1}{Q'(\sigma(4t))}\left\|Q'(x)w(x)g(x)\right\|_{L_1(|x|\ge \sigma(4t))}.
\end{equation}
Here we see
\begin{equation}\label{3.5}
 \frac{\sqrt{T(\sigma(t))}}{Q'(\sigma(t))}\sim t.
\end{equation}
In fact, from Lemma \ref{Lemma 3.2} (2), for $t=\frac{a_u}{u}$
\begin{equation*}
 Q'(\sigma(t))= Q'(a_u)\sim \frac{u\sqrt{T(a_u)}}{a_u}\sim \frac{\sqrt{T(\sigma(t))}}{t}.
\end{equation*}
On the other hand, we have
\begin{eqnarray*}
\int_{0}^{\infty} Q'(x)w(x)|g(x)|dx
&=& \int_{0}^{\infty} Q'(x)w(x)\left|\int_0^xdg(u)\right|dx\\
&\le& \int_{0}^{\infty} Q'(x)w(x)\int_0^x|df(u)|dx\\
&=& -w(x)\int_0^x|df(u)|\bigg|_0^{\infty}
+\int_0^{\infty} w(x)|df(x)|\\
&=&\int_0^{\infty} w(x)|df(x)|
\end{eqnarray*}
because $\lim_{x\to \infty} w(x) =0$ from (c) of Definition \ref{Definition 1.1}.
Similarly we have
\begin{equation*}
 \int_{-\infty}^0 \left| Q'(x)w(x)g(x)\right|dx
 \le \int_{-\infty}^0 w(x)|df(x)|.
\end{equation*}
Hence we have
\begin{equation}\label{3.6}
 \| Q'wg\|_{L_1(\mathbb{R})}\le \int_{-\infty}^{\infty} w(u)|df(u)|.
\end{equation}
Therefore, using (\ref{3.4}), (\ref{3.5}) and (\ref{3.6}), we have
\begin{equation}\label{3.7}
\inf_{c\in\mathbb{R}}\left\|w(x)(f-c)(x)\right\|_{L_1(|x|\ge \sigma(4t))}
= O(t) \int_{-\infty}^{\infty} w(x)|df(x)|.
\end{equation}
Consequently, by (\ref{3.3}) and (\ref{3.7}) we have
\begin{equation*}
 \omega_1(f,w,t)\le C t\int_{-\infty}^{\infty} w(x)|df(x)|.
\end{equation*}
Hence, setting  $t=C_2\frac{a_n}{n}$, if we use Proposition \ref{Proposition 3.3},
then
\begin{equation*}
 E_{1,n}(f,w)\le C\frac{a_n}{n}\int_{-\infty}^{\infty} w(x)|df(x)|.
\end{equation*}
(2) Given $\varepsilon>0$, and let us take $L=L(\varepsilon)>0$ large enough as
\begin{equation*}
\sup_{|x|\ge L}|w(x)f(x)| \le \frac{1}{\sqrt{T(L)}}\sup_{|x|\ge L}|\sqrt{T(x)}w(x)f(x)|<\varepsilon
\quad (\textrm{by our assumption}).
\end{equation*}
Then we have
\begin{equation*}
\inf_{c\in\mathbb{R}}\sup_{|x|\ge L} \left|w(x)(f-c)(x)\right|
\le \frac{1}{\sqrt{T(L)}}\sup_{|x|\ge L}\left|\sqrt{T(x)}w(x)f(x)\right|<\varepsilon.
\end{equation*}
Now, there exists $\varepsilon >0$ small enough such that
\begin{equation*}
 \frac{h}{2}\Phi_t(x)\le \varepsilon \frac{1}{T(x)},  \quad |x|\le \sigma(2t),
\end{equation*}
because if we put $t=a_u/u$, then we see $\sigma(t)=a_u$ and $|x|\le \sigma(2t)<a_u$.
Hence, noting \cite[Lemma 3.7]{[8]}, that is,
for some $\varepsilon >0$, and for large enough $t$,
\begin{equation*}
T(a_t) \le C t^{2-\varepsilon},
\end{equation*}
and if $w$ is the Erd\"{o}s-type weight, then from Lemma \ref{Lemma 3.2} (4), we have
\begin{equation}\label{3.8}
 t\Phi_t(x)\le \frac{a_u}{u}\le \varepsilon \frac{1}{T(a_u)}\le \varepsilon \frac{1}{T(x)}.
\end{equation}
If $w\in\mathcal{F}^*$, we also have (\ref{3.8}), because for some $\delta >0$ and $u>0$ large enough,
\begin{equation*}
 t\Phi_t(x)\le \frac{a_u}{u}\le u^{-\delta} \le \varepsilon \frac{1}{T(x)}.
\end{equation*}
Therefore, using Lemma \ref{Lemma 3.2} (3), Lemma \ref{Lemma 3.5}
and the assumption
\[
\lim_{|x|\to \infty}\sqrt{T\left(x\right)}w\left(x\right)f\left(x\right)=0,
\]
for $2L\le |x|\le \sigma(2t)$,   $h>0$,
\begin{eqnarray*}
&&\left|w(x)\left\{f\left(x+\frac{h}{2}\Phi_t(x)\right)-f\left(x-\frac{h}{2}\Phi_t(x)\right)\right\}\right|\\
&\le& C\Bigg[\frac{1}{\sqrt{T(x)}}
\left|\sqrt{T\left(x+\frac{h}{2}\Phi_t(x)\right)}w\left(x+\frac{h}{2}\Phi_t(x)\right)f\left(x+\frac{h}{2}\Phi_t(x)\right)\right|\\
&& \qquad +\frac{1}{\sqrt{T(x)}}
\left|\sqrt{T\left(x-\frac{h}{2}\Phi_t(x)\right)}w\left(x-\frac{h}{2}\Phi_t(x)\right)f\left(x-\frac{h}{2}\Phi_t(x)\right)\right|\Bigg]\\
&\le& 2\varepsilon.
\end{eqnarray*}
On the other hand,
\begin{equation*}
\lim_{t\rightarrow 0}\sup_{0<h\le t}
\left\|w(x) \left\{f\left(x+\frac{h}{2}\Phi_t(x)\right)
-f\left(x-\frac{h}{2}\Phi_t(x)\right)\right\}\right\|_{L_\infty(|x|\le 2L)}=0.
\end{equation*}
Therefore, we have the result.
\end{proof}
\begin{lemma}[cf.{\cite[Lemma 4.4]{[4]}}]\label{Lemma 3.6}
Let $g$ be a real valued function on $\mathbb{R}$ satisfying $\|gw\|_{L_\infty(\mathbb{R})}<\infty$ and
\begin{equation}\label{3.9}
 \int_{-\infty}^{\infty} gPw^2dt=0 \quad P\in\mathcal{P}_n.
\end{equation}
Then we have
\begin{equation}\label{3.10}
\left\|w(x)\int_0^x g(t)dt \right\|_{L_\infty(\mathbb{R})}
\le C \frac{a_n}{n}\|gw\|_{L_\infty(\mathbb{R})}.
\end{equation}
Especially, if $w\in \mathcal{F}_\lambda(C^3+),   0<\lambda<3/2$, then we have
\begin{equation}\label{3.11}
\left\|w(x)\int_0^x \left(f'(t)-v_n(f')(t)\right)dt \right\|_{L_\infty(\mathbb{R})}
\le C \frac{a_n}{n}E_{n} \left(w_{1/4},f'\right).
\end{equation}
When $w\in \mathcal{F}^*$, we also have (\ref{3.11}) replacing $w_{1/4}$ with $w$.
\end{lemma}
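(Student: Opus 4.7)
The plan is to exploit the orthogonality (\ref{3.9}) via duality, converting the pointwise bound on $w(x)G(x)$ (where $G(x):=\int_0^x g(t)\,dt$) into an $L_1$-approximation problem for a specific step function, to which Theorem \ref{Theorem 3.4}(1) applies. Fix $x\in\mathbb{R}$ and write $G(x)=\int g(t)\eta_x(t)\,dt$, where $\eta_x$ is the signed indicator of the interval from $0$ to $x$. By (\ref{3.9}), for every $P\in\mathcal{P}_n$,
\[
G(x)=\int g(t)[\eta_x(t)-w^2(t)P(t)]\,dt=\int(gw)(t)\,w(t)\!\left[\frac{\eta_x(t)}{w^2(t)}-P(t)\right]dt,
\]
so H\"older's inequality followed by taking the infimum over $P$ yields $|G(x)|\le\|gw\|_{L_\infty(\mathbb{R})}\,E_{1,n}(\eta_x/w^2,w)$.

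Since $\phi_x:=\eta_x/w^2$ has bounded variation on every compact interval, Theorem \ref{Theorem 3.4}(1) bounds $E_{1,n}(\phi_x,w)$ by $C(a_n/n)\int w\,|d\phi_x|$. The function $\phi_x$ carries jumps of sizes $1$ at $t=0$ and $1/w^2(x)$ at $t=x$, while on the interior $d\phi_x=2Q'(t)/w^2(t)\,dt$. Using $w(0)=1$ and the telescoping identity $\int_0^x Q'(t)e^{Q(t)}\,dt=e^{Q(x)}-1$, a direct calculation for $x>0$ (and symmetry for $x<0$) will give
\[
\int_{\mathbb{R}}w(t)|d\phi_x(t)|=1+2(e^{Q(x)}-1)+\frac{1}{w(x)}=\frac{3}{w(x)}-1.
\]
Multiplying through by $w(x)$ absorbs the $1/w(x)$ growth and produces (\ref{3.10}).

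For (\ref{3.11}) I take $g:=f'-v_n(f')$. Since each partial Fourier sum $s_j(w^2,\cdot)$ with $j\ge n+1$ reproduces $\mathcal{P}_n$, so does $v_n$, which gives the orthogonality $\int(f'-v_n(f'))P\,w^2\,dt=0$ for $P\in\mathcal{P}_n$. Applying (\ref{3.10}) and the bound $\|(f'-v_n(f'))w\|_{L_\infty(\mathbb{R})}\le CE_n(w_{1/4},f')$ from Proposition \ref{Proposition 3.1} yields (\ref{3.11}); for $w\in\mathcal{F}^*$ the same argument works with $w_{1/4}$ replaced by $w$ in Proposition \ref{Proposition 3.1}. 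The principal obstacle I anticipate is the explicit weighted total-variation computation and showing that its $1/w(x)$ growth is exactly cancelled by the external factor $w(x)$; this cancellation depends crucially on the exponential structure $w=e^{-Q}$ through the identity $\int_0^x Q'e^Q=e^{Q(x)}-1$.
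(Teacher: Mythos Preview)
Your proposal is correct and follows essentially the same route as the paper: the duality trick with $\phi_x=\eta_x/w^2$, the appeal to Theorem~\ref{Theorem 3.4}(1) for $E_{1,n}(\phi_x,w)$, the explicit evaluation $\int_0^x Q'e^Q=e^{Q(x)}-1$ to cancel the $w(x)$ factor, and the orthogonality of $f'-v_n(f')$ to $\mathcal{P}_n$ combined with Proposition~\ref{Proposition 3.1} for (\ref{3.11}). Your total-variation computation is in fact slightly more careful than the paper's (you track the jump contributions at $0$ and $x$ explicitly, arriving at $3/w(x)-1$, whereas the paper keeps only the absolutely continuous part), but this changes nothing up to constants.
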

\begin{proof}
We let
\begin{equation}\label{3.12}
\phi_x(t)=
\left\{
\begin{array}{lr}
         w^{-2}(t),& 0\le t\le x; \\
         0,& otherwise,
\end{array}
         \right.
\end{equation}
then we have for arbitrary $P_n\in\mathcal{P}_n$,
\begin{equation}\label{3.13}
\left|\int_0^x g(t)dt\right|
=\left|\int_{-\infty}^{\infty} g(t)\phi_x(t)w^2(t)dt\right|
=\left|\int_{-\infty}^{\infty} g(t)(\phi_x(t)-P_n(t))w^2(t)dt\right|.
\end{equation}
Therefore, we have
\begin{eqnarray*}
\left|\int_0^x g(t)dt\right|&\le& \|gw\|_{L_\infty(\mathbb{R})}
\inf_{P_n\in\mathcal{P}_n}\int_{-\infty}^{\infty} \left|\phi_x(t)-P_n(t)\right|w(t)dt \\
&=&\|gw\|_{L_\infty(\mathbb{R})}E_{1,n}(w:\phi_x).
\end{eqnarray*}
Here, from Theorem \ref{Theorem 3.4} we see that
\begin{eqnarray*}
E_{1,n}(w:\phi_x)&\le& C\frac{a_n}{n}\int_{-\infty}^{\infty} w(t)|d\phi_x(t)|
\le C\frac{a_n}{n}\int_0^x w(t)|Q'(t)|w^{-2}(t)dt\\
&=& C\frac{a_n}{n}\int_0^x Q'(t)w^{-1}(t)dt
\le C\frac{a_n}{n}w^{-1}(x).
\end{eqnarray*}
So, we have
\begin{equation*}
\left|w(x)\int_0^x g(t)dt\right|\le
\|gw\|_{L_\infty(\mathbb{R})}w(x)E_{1,n}\left(w:\phi_x\right)
\le C\frac{a_n}{n}\|gw\|_{L_\infty(\mathbb{R})}.
\end{equation*}
Therefore, we have (\ref{3.10}). Next we show (\ref{3.11}).
Since
\begin{equation*}
v_n(f')(t)=\frac{1}{n}\sum_{j=n+1}^{2n}s_j(f',t),
\end{equation*}
and for any $P\in\mathcal{P}_n$,   $j\ge n+1$,
\begin{equation*}
 \int_{-\infty}^{\infty} \left(f'(t)-s_j(f';t)\right)P(t)w^2(t)dt=0,
\end{equation*}
we have
\begin{equation*}
 \int_{-\infty}^{\infty} \left(f'(t)-v_n(f')(t)\right)P(t)w^2(t)dt=0.
\end{equation*}
Using (\ref{3.10}) and (\ref{3.1}), we have (\ref{3.11}).
\end{proof}

\begin{lemma}\label{Lemma 3.7}
Let $w=\exp(-Q)\in\mathcal{F}_\lambda(C^3+)$,   $0<\lambda<3/2$.
Let $\left\|w_{1/4}f'\right\|_{L_\infty(\mathbb{R})}<\infty$,
and let $q_{n-1}\in \mathcal{P}_n$ be the best approximation of $f'$ with respect to the weight $w$,
that is,
\begin{equation*}
\left\|(f'-q_{n-1})w \right\|_{L_\infty(\mathbb{R})}=E_{n-1}(w,f').
\end{equation*}
Now we set
\begin{equation*}
 F(x):=f(x)-\int_0^xq_{n-1}(t)dt,
\end{equation*}
then there exists $S_{2n}\in \mathcal{P}_{2n}$ such that
\begin{equation*}
 \left\|w \left(F-S_{2n}\right) \right\|_{L_\infty(\mathbb{R})}
 \le C  \frac{a_n}{n}E_n \left(w_{1/4},f'\right),
\end{equation*}
and
\begin{equation*}
 \left\|wS_{2n}' \right\|_{L_\infty(\mathbb{R})}\le C E_{n-1}\left(w_{1/4},f'\right).
\end{equation*}
When $w\in \mathcal{F}^*$, we also have same results replacing $w_{1/4}$ with $w$.
\end{lemma}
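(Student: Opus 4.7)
The plan is to take $S_{2n}$ to be an antiderivative of the de la Vall\'ee Poussin mean of $F'$. Concretely, I would set
\begin{equation*}
S_{2n}(x) := \int_0^x v_n(F')(t)\, dt.
\end{equation*}
Since each partial sum $s_j$ lies in $\mathcal{P}_{j-1}$, the average $v_n(F')$ belongs to $\mathcal{P}_{2n-1}$, and hence $S_{2n}\in\mathcal{P}_{2n}$, as required.

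The key algebraic observation is that $v_n$ reproduces polynomials of degree at most $n$: for every $j\ge n+1$ the partial sum $s_j$ reproduces $\mathcal{P}_{j-1}\supset\mathcal{P}_n$, and averaging in $j$ preserves this. Applied to $q_{n-1}$ together with linearity of $v_n$, this yields $v_n(F')=v_n(f')-q_{n-1}$, and hence
\begin{equation*}
F(x)-S_{2n}(x)=\int_0^x\bigl(f'(t)-v_n(f')(t)\bigr)\,dt.
\end{equation*}
The first claimed bound then follows verbatim from (\ref{3.11}) in Lemma \ref{Lemma 3.6}.

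For the derivative estimate, differentiating gives $S_{2n}'=v_n(f')-q_{n-1}$, which I would split as
\begin{equation*}
wS_{2n}'=w\bigl(v_n(f')-f'\bigr)+w\bigl(f'-q_{n-1}\bigr).
\end{equation*}
The first summand is controlled by $CE_n(w_{1/4},f')$ via Proposition \ref{Proposition 3.1}, and the second by $E_{n-1}(w,f')$ by the definition of $q_{n-1}$. Because $T\ge\Lambda>1$ forces $w\lesssim w_{1/4}$ pointwise, we have $E_{n-1}(w,f')\le CE_{n-1}(w_{1/4},f')$, and the two halves combine to the required bound.

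No genuine obstacle is expected: the construction is essentially the only natural one, and both needed ingredients (the reproducing property of $v_n$ on $\mathcal{P}_n$ and the integrated-error bound of Lemma \ref{Lemma 3.6}) have already been set up. The only mildly delicate point is keeping track of which error quantity controls which summand; this is what forces the weight $w_{1/4}$ to appear on the right-hand side. The Freud-type case $w\in\mathcal{F}^*$ is handled automatically by the parenthetical remarks in Proposition \ref{Proposition 3.1} and Lemma \ref{Lemma 3.6}, which allow $w_{1/4}$ to be replaced by $w$ throughout the argument.
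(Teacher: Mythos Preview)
Your approach is essentially identical to the paper's: the paper also defines $S_{2n}$ as an antiderivative of $v_n(f'-q_{n-1})$ and splits $wS_{2n}'$ exactly as you do. The only slip is that your $S_{2n}$ is missing the constant term $f(0)=F(0)$: with your definition one gets
\[
F(x)-S_{2n}(x)=f(x)-\int_0^x v_n(f')(t)\,dt=f(0)+\int_0^x\bigl(f'(t)-v_n(f')(t)\bigr)\,dt,
\]
so the stray $f(0)$ survives and the first bound fails in general. Setting $S_{2n}(x):=F(0)+\int_0^x v_n(F')(t)\,dt$ (which is what the paper does) fixes this without affecting the degree or the derivative estimate.
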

\begin{proof}
Let
\begin{equation}\label{3.14}
 S_{2n}(x)=f(0)+\int_0^x v_n \left(f'-q_{n-1}\right)(t)dt,
\end{equation}
then by Lemma \ref{Lemma 3.6} (\ref{3.11}),
\begin{eqnarray*}
&&\left\|w\left(F-S_{2n}\right)\right\|_{L_\infty(\mathbb{R})} \\
&=&\left\|w\left(f-\int_0^xq_{n-1}(t)dt -f(0)-\int_0^x v_n\left(f'-q_{n-1}\right)(t)dt\right)\right\|_{L_\infty(\mathbb{R})}\\
&=&\left\|w\left(\int_0^x \left[f'(t)-v_n(f')(t)\right]dt\right)\right\|_{L_\infty(\mathbb{R})}
\le  C\frac{a_n}{n}E_n\left(w_{1/4},f'\right).
\end{eqnarray*}
Now by Proposition \ref{Proposition 3.1} (\ref{3.1}),
\begin{eqnarray*}
\left\|wS_{2n}'\right\|_{L_\infty(\mathbb{R})}
&=&\left\|w\left(v_n(f'-q_{n-1})\right)\right\|_{L_\infty(\mathbb{R})}\\
&\le&\left\|\left(f'-v_n\left(f'\right)\right)w\right\|_{L_\infty(\mathbb{R})}
+\left\|\left(f'-q_{n-1}\right)w\right\|_{L_\infty(\mathbb{R})}\\
&\le& E_{n}\left(w_{1/4},f'\right)+E_{n-1}\left(w,f'\right)
\le E_{n-1}\left(w_{1/4},f'\right).
\end{eqnarray*}
\end{proof}

To prove Theorem \ref{Theorem 2.3} we need the following theorems.

\begin{theorem}[{\cite[Corollary 3.4]{[9]}}] \label{Theorem 3.8}
Let $w\in\mathcal{F}(C^2+)$, and let $r\ge 0$ be an integer.
Let $1\le p\le \infty$, and let $wf^{(r)}\in L_p(\mathbb{R})$. Then we have
\begin{equation*}
 E_{p,n}(f,w)\le C  \left(\frac{a_n}{n}\right)^k \left\|f^{(k)}w\right\|_{L_p(\mathbb{R})}, \quad   k=1,2,...,r,
\end{equation*}
and equivalently,
\begin{equation*}
 E_{p,n}(f,w)\le C  \left(\frac{a_n}{n}\right)^kE_{p,n-k}\left(f^{(k)},w\right).
\end{equation*}
\end{theorem}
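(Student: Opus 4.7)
The plan is to reduce the entire theorem to its base case $k=1$ of the first inequality, then iterate, and finally produce the ``equivalent'' form in terms of $E_{p,n-k}(f^{(k)},w)$ by a standard antiderivative device. Specifically, once we have
\[
E_{p,n}(f,w) \le C\frac{a_n}{n}\|f'w\|_{L_p(\mathbb{R})}
\]
for every $f$ with $f'w\in L_p(\mathbb{R})$, the case $k=1$ of the equivalent form follows by choosing $Q_{n-1}\in\mathcal{P}_{n-1}$ realizing $E_{p,n-1}(f',w)$, letting $R_n\in\mathcal{P}_n$ be any antiderivative of $Q_{n-1}$, and applying the base inequality to $f-R_n$: its derivative has $L_p(w)$-norm exactly $E_{p,n-1}(f',w)$, while $E_{p,n}(f,w)=E_{p,n}(f-R_n,w)$. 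Iterating $k$ times and invoking $a_{n-j}\sim a_n$ (Lemma 3.2(1)) for $0\le j\le k$ (say for $n\ge 2k$) collapses the product $\prod_{j=0}^{k-1}(a_{n-j}/(n-j))$ to $C(a_n/n)^k$, yielding both stated inequalities.

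For the base case itself I would invoke Proposition 3.3 to replace $E_{p,n}(f,w)$ by $C\omega_p(f,w,Ca_n/n)$ and then prove
\[
\omega_p(f,w,t)\le Ct\|f'w\|_{L_p(\mathbb{R})}.
\]
The modulus has two summands. On the bulk region $|x|\le\sigma(2t)$, I write the symmetric difference as $\Phi_t(x)\int_{-h/2}^{h/2} f'(x+s\Phi_t(x))\,ds$, apply Minkowski's integral inequality in $L_p$, and use Lemma 3.5(2) to replace $w(x)$ by $w(x+s\Phi_t(x))$ up to a constant (legitimate since $|s\Phi_t(x)|\le t\Phi_t(x)$). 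Because $\Phi_t(x)\le 2$ on this range, the outer factor is harmless and, after a change of variables, one obtains a bound $\le Ch\|f'w\|_{L_p}\le Ct\|f'w\|_{L_p}$. On the tail $|x|\ge\sigma(4t)$, I choose $c=f(\sigma(4t))$ on $\mathbb{R}^+$ (and symmetrically on $\mathbb{R}^-$) so that $f(x)-c=\int_{\sigma(4t)}^x f'(u)\,du$, then integrate by parts against the weight as in the proof of Theorem 3.4(1) to obtain
\[
\|w(f-c)\|_{L_p(|x|\ge\sigma(4t))}\le \frac{C}{Q'(\sigma(4t))}\|f'w\|_{L_p(\mathbb{R})},
\]
and conclude via (3.5) together with Lemma 3.2(1,2), which jointly give $1/Q'(\sigma(4t))\le Ct$.

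The main anticipated obstacle is the tail estimate at general $p\in[1,\infty]$: the integration-by-parts argument transferred from Theorem 3.4(1) is most naturally an $L_1$ argument, and promoting it to arbitrary $L_p$ requires either a weight-decay pointwise bound (straightforward at $p=\infty$) or a Hölder/interpolation-type step for intermediate $p$. A secondary technical nuisance is controlling the Jacobian of the nonlinear substitution $y=x+s\Phi_t(x)$ that underlies the change of variables in the bulk estimate; this is routine but must be checked, since $\Phi_t$ is not constant in $x$.
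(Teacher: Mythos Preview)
The paper does not itself prove Theorem~3.8: it is quoted from \cite[Corollary~3.4]{[9]}. However, the paper does prove your base case $k=1$ (the inequality $\omega_p(f,w,t)\le Ct\|f'w\|_{L_p}$ combined with Proposition~3.3) as Lemma~4.5, so that is the relevant comparison. Your iteration-plus-antiderivative reduction from general $k$ to $k=1$, using $a_{n-j}\sim a_n$, is standard and correct and is not spelled out in this paper.

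On the bulk region $|x|\le\sigma(2t)$, your Minkowski-integral argument and the paper's mean-value-theorem argument in Lemma~4.5 are essentially equivalent; both rely on Lemma~3.5(2) to transfer the weight, and both face the same mild change-of-variables issue you flag. The substantive difference is on the tail $|x|\ge\sigma(4t)$. You correctly identify promoting the $L_1$ integration-by-parts bound of Theorem~3.4(1) to general $L_p$ as the main obstacle. The paper's Lemma~4.5 does not use integration by parts here at all; instead it invokes Lemma~4.4 (a Hardy-type inequality taken from \cite{[6]}):
\[
\|Q'wg\|_{L_p(\mathbb{R})}\le C\|wg'\|_{L_p(\mathbb{R})},\qquad g(0)=0,
\]
valid for all $1\le p\le\infty$. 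Taking $c=f(0)$, one bounds the tail term by $Q'(\sigma(4t))^{-1}\|Q'w(f-f(0))\|_{L_p}$, applies Lemma~4.4, and then uses (3.5) to convert $1/Q'(\sigma(4t))$ into $O(t)$. This Hardy inequality is precisely the missing ingredient in your sketch; once it is supplied, your plan goes through cleanly for every $1\le p\le\infty$.
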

\begin{theorem}[{\cite[Corollary 6.2]{[10]}}]\label{Theorem 3.9}
Let $r\ge 1$ be an integer and $w\in \mathcal{F}_\lambda(C^{r+2}+)$,
$0< \lambda<(r+2)/(r+1)$, and let $1\le p\le \infty$.
Then there exists a constant $C>0$ such that
for any $1\le k\le r$, any integer $n\ge 1$ and any polynomial $P\in\mathcal{P}_n$,
\begin{equation*}
 \left\|P^{(k)}w\right\|_{L_p(\mathbb{R})}
 \le C\left(\frac{n}{a_n}\right)^k\left\|T^{k/2}Pw\right\|_{L_p(\mathbb{R})}.
\end{equation*}
\end{theorem}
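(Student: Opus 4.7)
The plan is an induction on $k$, based on the $k=1$ weighted Markov--Bernstein inequality together with the weight--modification lemma Theorem \ref{Theorem 2.2}.

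For the base case I would invoke the classical infinite--range Bernstein inequality
\[
\|P'w\|_{L_p(\mathbb{R})}\le C\,\frac{n}{a_n}\,\|T^{1/2}Pw\|_{L_p(\mathbb{R})},\qquad P\in\mathcal{P}_n,
\]
valid for every $w\in\mathcal{F}_\lambda(C^3+)$ (indeed essentially for every $w\in\mathcal{F}(C^2+)$). This can be read off from the pointwise orthonormal--polynomial asymptotics of Levin and Lubinsky \cite{[8]}, or obtained by combining the Mhaskar--Saff Bernstein inequality on the finite interval $[-a_n,a_n]$ with the restricted--range equivalence $\|Pw\|_{L_p(\mathbb{R})}\sim\|Pw\|_{L_p([-a_n,a_n])}$ available for $P\in\mathcal{P}_n$.

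For the inductive step, suppose the inequality is known at level $k$ with the same dependence on the weight, and let $P\in\mathcal{P}_n$. Applying the base case to $P^{(k)}\in\mathcal{P}_{n-k}\subset\mathcal{P}_n$ gives
\[
\|P^{(k+1)}w\|_{L_p(\mathbb{R})}\le C\,\frac{n}{a_n}\,\|T^{1/2}P^{(k)}w\|_{L_p(\mathbb{R})}.
\]
By Theorem \ref{Theorem 2.2} with $(\mu,\nu,\alpha,\beta)=(1/2,0,0,0)$ one produces an admissible weight $w_{1/2}$ satisfying $T^{1/2}w\sim w_{1/2}$, $a_n(w_{1/2})\sim a_n(w)$ and $T_{w_{1/2}}\sim T_w$. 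Rewriting the norm on the right in terms of $w_{1/2}$ and applying the inductive hypothesis to $P$ against the weight $w_{1/2}$ yields
\[
\|T^{1/2}P^{(k)}w\|_{L_p(\mathbb{R})}\sim\|P^{(k)}w_{1/2}\|_{L_p(\mathbb{R})}\le C\Bigl(\frac{n}{a_n}\Bigr)^k\|T^{k/2}Pw_{1/2}\|_{L_p(\mathbb{R})}\sim C\Bigl(\frac{n}{a_n}\Bigr)^k\|T^{(k+1)/2}Pw\|_{L_p(\mathbb{R})}.
\]
Combining the two displays and absorbing constants closes the induction.

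The main obstacle is bookkeeping the smoothness through the induction. Each invocation of Theorem \ref{Theorem 2.2} drops the class from $\mathcal{F}_\lambda(C^{m+2}+)$ to $\mathcal{F}_\lambda(C^{m+1}+)$, so to reach the level-$k$ step in this manner the initial weight must lie in $\mathcal{F}_\lambda(C^{k+2}+)$; the hypothesis $w\in\mathcal{F}_\lambda(C^{r+2}+)$ accommodates every $1\le k\le r$, and the admissible range of $\lambda$ is preserved along the induction since $(r+2)/(r+1)<(r+1)/r$. A secondary issue is that the equivalence constants coming out of Theorem \ref{Theorem 2.2} depend on the chosen parameters $(\mu,\nu,\alpha,\beta)$; as $r$ is fixed and the induction has at most $r$ levels, these constants are harmlessly absorbed into the final $C$.
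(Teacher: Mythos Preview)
The paper does not prove Theorem \ref{Theorem 3.9}; it is quoted verbatim from \cite[Corollary 6.2]{[10]} and used as a black box. So there is no in-paper argument to compare your proposal against.

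Your argument is sound. The base case follows immediately from the standard Markov--Bernstein inequality $\|P'w\|_{L_p(\mathbb{R})}\le C(n/a_n)\|Pw\|_{L_p(\mathbb{R})}$ for $w\in\mathcal{F}(C^2+)$ (Levin--Lubinsky \cite{[8]}), since $T\ge\Lambda>1$ makes the $T^{1/2}$ on the right harmless. The inductive step is exactly the natural one: convert $T^{1/2}w$ into an admissible weight $w_{1/2}$ via Theorem \ref{Theorem 2.2}, use that $a_n(w_{1/2})\sim a_n(w)$ (from the two-sided bound in Theorem \ref{Theorem 2.2} together with Lemma \ref{Lemma 3.2}(1)) and $T_{w_{1/2}}\sim T_w$, and apply the hypothesis at level $k$ to $w_{1/2}$. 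Your smoothness bookkeeping is correct: each pass through Theorem \ref{Theorem 2.2} costs one derivative of $Q$, and the range of $\lambda$ only widens as the order drops, so $w\in\mathcal{F}_\lambda(C^{r+2}+)$ suffices for all $1\le k\le r$.

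For context, the title of \cite{[10]} suggests the original argument proceeds by mollifying $w$ rather than by your direct induction; Theorem \ref{Theorem 2.2} in the present paper is itself taken from \cite{[10]}, so the mollification machinery is in any case doing the real work in both approaches.
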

\begin{proof}[Proof of Theorem \ref{Theorem 2.3}]
We prove the theorem only in case of unbounded $T(x)$,
in the case of Freud case $\mathcal{F}^*$ we can prove it similarly.
We show that for $k=0,1,...,r$,
\begin{equation}\label{3.15}
\left|\left(f^{(k)}(x)-P_{n;f,w}^{(k)}\right)w(x)\right|
\le CT^{k/2}(x)E_{n-k}\left(w_{1/4},f^{(k)}\right).
\end{equation}
If $r=0$, then (\ref{3.15}) is trivial. For some $r\ge 0$ we suppose that (\ref{3.15}) holds,
and let $f\in C^{(r+1)}(\mathbb{R})$. Then $f'\in C^{(r)}(\mathbb{R})$.
Let $q_{n-1}\in\mathcal{P}_{n-1}$ be the polynomial of best approximation of $f'$
with respect to the weight $w$.
Then, from our assumption we have for $0\le k\le r$,
\begin{equation*}
\left|\left(f^{(k+1)}(x)-q_{n-1}^{(k)}(x)\right)w(x)\right|
\le C T^{k/2}(x)E_{n-k}\left(w_{1/4},f^{(k+1)}\right),
\end{equation*}
that is, for $1\le k\le r+1$
\begin{equation}\label{3.16}
\left|\left(f^{(k)}(x)-q_{n-1}^{(k-1)}(x)\right)w(x)\right|
\le C T^{\frac{k-1}{2}}(x)E_{n-k+1}\left(w_{1/4},f^{(k)}\right).
\end{equation}
Let
\begin{equation}\label{3.17}
 F(x):=f(x)-\int_0^x q_{n-1}(t)dt=f(x)-Q_n(x),
\end{equation}
then
\begin{equation*}
 |F'(x)w(x)|\le C E_{n-1}\left(w,f'\right).
\end{equation*}
As (\ref{3.14}) we set $S_{2n}=\int_0^x(v_n(f')(t)-q_{n-1}(t))dt+f(0)$, then from Lemma \ref{Lemma 3.7}
\begin{equation}\label{3.18}
\left\|\left(F-S_{2n}\right)w\right\|_{L_\infty(\mathbb{R})}
\le C  \frac{a_n}{n}E_n\left(w_{1/4},f'\right),
\end{equation}
and
\begin{equation*}
\left\|S_{2n}'w\right\|_{L_\infty(\mathbb{R})}\le C E_{n-1}\left(w_{1/4},f'\right).
\end{equation*}
Here we apply Theorem \ref{Theorem 3.9} with the weight $w_{-(k-1)/2}$.
In fact, by Theorem \ref{Theorem 2.2} we have $w_{-(k-1)/2}\in\mathcal{F}_\lambda(C^{r+2}+)$.
Then, noting $a_{2n}\sim a_n$ from Lemma \ref{Lemma 3.2} (1), we see
\begin{eqnarray*}
|S_{2n}^{(k)}(x) w_{-(k-1)/2}(x))|&\le& C \left(\frac{n}{a_n}\right)^{k-1}\|S_{2n}'w\|_{L_\infty(\mathbb{R})}\\
&\le& C \left(\frac{n}{a_n}\right)^{k-1}E_{n-1}\left(w_{1/4},f'\right),
\end{eqnarray*}
that is,
\begin{equation}\label{3.19}
\left|S_{2n}^{(k)}(x) w(x)\right|
\le C \left(\frac{n\sqrt{T(x)}}{a_n}\right)^{k-1}E_{n-1}\left(w_{1/4},f'\right),
\quad 1\le k\le r+1.
\end{equation}
Let $R_{n}\in \mathcal{P}_{n}$ denote the polynomial of best approximation of  $F$ with $w$.
By Theorem \ref{Theorem 3.9} with $w_{-\frac{k}{2}}$ again, for $0\le k\le r+1$ we have
\begin{eqnarray}\label{3.20} \nonumber
\left|(R_{n}^{(k)}-S_{2n}^{(k)}(x)) w_{-\frac{k}{2}}(x)\right|
&\le& C \left(\frac{n}{a_n}\right)^k\|(R_{n}-S_{2n})w_{-\frac{k}{2}}(x)T^{k/2}(x)\|_{L_\infty(\mathbb{R})}\\
&\le& C \left(\frac{n}{a_n}\right)^k\|(R_{n}-S_{2n})w\|_{L_\infty(\mathbb{R})}
\end{eqnarray}
and by (\ref{3.18})
\begin{eqnarray}\label{3.21} \nonumber
\|(R_{n}-S_{2n})w\|_{L_\infty(\mathbb{R})}
&\le& C \left[\|(F-R_{n})w\|_{L_\infty(\mathbb{R})}+\|(F-S_{2n})w\|_{L_\infty(\mathbb{R})}\right]\\\nonumber
&\le& C \left[E_{n}(w,F)+\frac{a_n}{n}E_{n}\left(w_{1/4},f'\right) \right] \\ \nonumber
&\le& C \left[\frac{a_n}{n}E_{n-1}(w,f')+\frac{a_n}{n}E_{n-1}(w_{1/4},f')\right]\\
&\le& C \frac{a_n}{n}E_{n-1}\left(w_{1/4},f'\right).
\end{eqnarray}
Hence,  from (\ref{3.20}) and (\ref{3.21}) we have for $0\le k\le r+1$
\begin{eqnarray}\label{3.22}\nonumber
|(R_{n}^{(k)}-S_{2n}^{(k)}(x)) w(x)|
&\le& C\left|T^{k/2}(x)\right|\left|(R_{n}^{(k)}-S_{2n}^{(k)}(x)) w_{-\frac{k}{2}}(x)\right|\\
&\le& C \left(\frac{n\sqrt{T(x)}}{a_n}\right)^k\frac{a_n}{n}E_{n-1}\left(w_{1/4},f'\right).
\end{eqnarray}
Therefore by (\ref{3.19}), (\ref{3.22}) and Theorem \ref{Theorem 3.8},
\begin{eqnarray}\label{3.23} \nonumber
|R_{n}^{(k)}(x) w(x))|
&\le& C T^{k/2}(x)\left(\frac{n}{a_n}\right)^{k-1}
E_{n-1}\left(w_{1/4},f'\right) \\
&\le& C T^{k/2}(x)E_{n-k}\left(w_{1/4},f^{(k)}\right).
\end{eqnarray}
Since $E_{n}(F,w)=E_{n}(w,f)$ and
\begin{equation}\label{3.24}
 E_{n}\left(F,w\right)=\left\|w\left(F-R_{n}\right)\right\|_{L_\infty(\mathbb{R})}
 =\left\|w\left(f-Q_n-R_{n}\right)\right\|_{L_\infty(\mathbb{R})}
\end{equation}
(see (\ref{3.17})), we know that $P_{n;f,w}:=Q_n+R_n$ is the polynomial of best approximation of $f$ with $w$.
Now, from (\ref{3.16}), (\ref{3.17}) and (\ref{3.23}) we have for $1\le k\le r+1$,
\begin{eqnarray*}
\left|\left(f^{(k)}(x)-P_{n;f.w}^{(k)}(x)\right)w(x)\right|
&=&\left|\left(f^{(k)}(x)-Q_n^{(k)}(x)-R_{n}^{(k)}(x)\right)w(x)\right|\\
&\le& \left|(f^{(k)}(x)-q_{n-1}^{(k-1)}(x))w(x)\right|
+\left|R_{n}^{(k)}(x)w(x)\right|\\
&\le& C T^{k/2}(x)E_{n-k}\left(w_{1/4},f^{(k)}\right).
\end{eqnarray*}
 For $k=0$ it is trivial. Consequently, we have (\ref{3.15}) for all $r\ge 0$.
Moreover, using Theorem \ref{Theorem 3.8}, we conclude Theorem \ref{Theorem 2.3}.
\end{proof}

\begin{proof}[Proof of Corollary \ref{Corollary 2.4}]
It follows from Theroem \ref{Theorem 2.3}.
\end{proof}

\begin{proof}[Proof of Corollary \ref{Corollary 2.5}]
Applying Theorem \ref{Theorem 2.3} with $w_{k/2}$,
we have for $0\le j\le r$
\begin{equation*}
\left\|(f^{(j)}-P_{n;f,w_{k/2}}^{(j)})w\right\|_{L_\infty(\mathbb{R})}
\le C E_{n-k}\left(w_{(2k+1)/4},f^{(j)}\right).
\end{equation*}
Especially, when $j=k$, we obtain
\begin{equation*}
 \left\|\left(f^{(k)}-P_{n;f,w_{k/2}}^{(k)}\right)w\right\|_{L_\infty(\mathbb{R})}
 \le CE_{p,n-k}\left(w_{(2k+1)/4},f^{(k)}\right).
\end{equation*}
\end{proof}

\setcounter{equation}{0}
\section{Theorems in $L_p(\mathbb{R})$ $(1 \le p \le \infty)$}

In this section we will give an analogy of Theorem \ref{Theorem 2.3}
in $L_p(\mathbb{R})$-space ($1 \le p \le \infty$)
and we will prove it using the same method as the proof of Theorem \ref{Theorem 2.3}.
Let $1 \le p \le \infty$.
Let $w=\exp(-Q)\in\mathcal{F}_\lambda(C^3+)$,   $0<\lambda<3/2$, and let $\beta>1$ be fixed. Then we set
$w^{\sharp}$ and $w^{\flat}$ as follows;
\begin{eqnarray*}
&& \frac{w(x)}{\left\{(1+|Q'(x)|)(1+|x|)^{\beta}\right\}^{1/p}}\sim w^{\sharp}(x)\in\mathcal{F}(C^2+);\\
&&w(x)\left\{(1+|Q'(x)|)(1+|x|)^{\beta}\right\}^{1/p} \sim w^{\flat}(x)\in\mathcal{F}(C^2+)
\end{eqnarray*}
(see Theorem \ref{Theorem 2.2}).
\begin{theorem}\label{Theorem 4.1}
Let $r \ge 0$ be an integer. Let $w=\exp(-Q)\in \mathcal{F}_\lambda(C^{r+2}+)$,
$0< \lambda<(r+2)/(r+1)$,
and let $\beta>1$ be fixed.
Suppose that $T^{1/4}f^{(r)}w\in L_p(\mathbb{R})$.
Let $P_{p,n;f,w}\in\mathcal{P}_n$ be the best approximation of $f$
with respect to the weight $w$ in $L_p(\mathbb{R})$-space, that is,
\begin{equation*}
E_{p,n}\left(w,f\right):=\inf_{P\in\mathcal{P}_n}\left\|\left(f-P\right)w\right\|_{L_p(\mathbb{R})}
=\left\|\left(f-P_{p,n;f,w}\right)w\right\|_{L_p(\mathbb{R})}.
\end{equation*}
Then there exists an absolute constant $C_r>0$ which depends only on $r$
such that for $0\le k\le r$ and $x\in\mathbb{R}$,
\begin{eqnarray*}
\left\|\left(f^{(k)}-P_{p,n;f,w}^{(k)}\right)w^{\sharp}_{-k/2}\right\|_{L_p(\mathbb{R})}
&\le& C_r E_{p,n-k}\left(w_{1/4},f^{(k)}\right)\\
&\le& C_r \left(\frac{a_n}{n}\right)^{r-k}E_{p,n-r}\left(w_{1/4},f^{(r)}\right).
\end{eqnarray*}
When $w\in\mathcal{F}^*$, we can replace $w_{1/4}$
and $w^{\sharp}_{-k/2}$ with $w$ and $w^{\sharp}$, respectively in the above.
\end{theorem}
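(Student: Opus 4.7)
The plan is to prove Theorem 4.1 by induction on $r$, carrying out the strategy of Theorem 2.3 in $L_p(\mathbb{R})$ rather than $L_\infty(\mathbb{R})$. The base case $r=0$ is immediate since $w^\sharp\lesssim w\lesssim w_{1/4}$ gives $\|(f-P_{p,n;f,w})w^\sharp\|_{L_p}\le C E_{p,n}(w,f)\le C E_{p,n}(w_{1/4},f)$. For the inductive step, given $f\in C^{(r+1)}(\mathbb{R})$, I take $q_{n-1}\in\mathcal{P}_{n-1}$ to be the $L_p(w)$-best approximation of $f'$ and apply the hypothesis to $f'$ (after reindexing) to obtain, for $1\le k\le r+1$,
$$\|(f^{(k)}-q_{n-1}^{(k-1)})w^\sharp_{-(k-1)/2}\|_{L_p}\le C E_{p,n-k+1}(w_{1/4},f^{(k)}).$$
I then set $Q_n(x):=\int_0^x q_{n-1}(t)\,dt$, $F:=f-Q_n$, and $S_{2n}(x):=f(0)+\int_0^x v_n(f'-q_{n-1})(t)\,dt\in\mathcal{P}_{2n}$, exactly paralleling (3.14)--(3.17).

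The crucial technical step is to establish $L_p$-analogs of Lemmas 3.6 and 3.7, namely
$$\|w^\sharp(F-S_{2n})\|_{L_p}\le C\frac{a_n}{n}E_{p,n}(w_{1/4},f'), \qquad \|wS_{2n}'\|_{L_p}\le C E_{p,n-1}(w_{1/4},f').$$
The second follows directly from Proposition 3.1. The first reduces, via the orthogonality $\int (f'-v_n(f'))Pw^2\,dt=0$ for $P\in\mathcal{P}_n$, to proving that for any $g$ with $gw\in L_p$ satisfying (3.9),
$$\left\|w^\sharp(x)\int_0^x g(t)\,dt\right\|_{L_p(\mathbb{R})}\le C\frac{a_n}{n}\|gw\|_{L_p(\mathbb{R})}.$$
This is established by duality: testing against $h\in L_{p'}$ with $\|h\|_{p'}=1$, swapping the order of integration, and using (3.9) to subtract an arbitrary $P\in\mathcal{P}_n$ from the resulting testing function $\phi_x(t)=w^{-2}(t)\chi_{[0,x]}(t)$. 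The residual approximation problem for $\phi_x$ is in a weighted $L_{p'}$-norm whose weight is of the form $\sim w(1+|Q'|)^{1/p'}(1+|x|)^{\beta/p'}$, and the specific choice of the modifying factor in $w^\sharp$ is dictated precisely to make Theorem 3.4(1) applicable here; together with $|d\phi_x|=w^{-2}|Q'|\,dt$ and $Q'w^{-1}=-(w^{-1})'$, one recovers the factor $a_n/n$.

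Once these lemmas are in hand, let $R_n\in\mathcal{P}_n$ be the $L_p(w)$-best approximation of $F$; then $P_{p,n;f,w}=Q_n+R_n$ as in (3.24). By Theorem 2.2 the weights $w^\sharp_{-(k-1)/2}$ remain in $\mathcal{F}_\lambda(C^{r+1}+)$, so Theorem 3.9 in $L_p$ bounds $\|S_{2n}^{(k)}w^\sharp_{-(k-1)/2}\|_{L_p}$ by $C(n/a_n)^{k-1}\|wS_{2n}'\|_{L_p}$ and $\|(R_n^{(k)}-S_{2n}^{(k)})w^\sharp_{-k/2}\|_{L_p}$ by $C(n/a_n)^k\|w^\sharp(R_n-S_{2n})\|_{L_p}$; combining with Theorem 3.8 to estimate $\|w(R_n-S_{2n})\|_{L_p}\lesssim E_{p,n}(w,F)+(a_n/n)E_{p,n}(w_{1/4},f')\lesssim (a_n/n)E_{p,n-1}(w_{1/4},f')$ reproduces (3.22)--(3.23) in the $L_p$ setting. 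The triangle inequality on $f^{(k)}-P_{p,n;f,w}^{(k)}=(f^{(k)}-q_{n-1}^{(k-1)})-R_n^{(k)}$ then closes the induction, and the second inequality in the theorem comes from $r-k$ applications of Theorem 3.8. The main obstacle I foresee is precisely the duality argument in the $L_p$-version of Lemma 3.6 — one must approximate a discontinuous function $\phi_x$ in a weighted $L_{p'}$-norm, and the polynomial-plus-$Q'$ factor in the definition of $w^\sharp$ is what makes the duality close; once this lemma is secured the rest is a near-mechanical transcription of the $L_\infty$ argument.
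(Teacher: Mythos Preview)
Your inductive scheme and all the constructions ($q_{n-1}$, $Q_n=\int_0^x q_{n-1}$, $F=f-Q_n$, $S_{2n}$ via the de la Vall\'ee Poussin mean, $R_n$ the $L_p(w)$-best approximation of $F$, and the identification $P_{p,n;f,w}=Q_n+R_n$) match the paper's proof exactly; the paper packages the $L_p$ analogs of Lemmas~3.6 and~3.7 as Lemmas~4.6 and~4.7 and then reruns the argument of Theorem~2.3 verbatim.

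The one place your sketch goes astray is the proof of the key integral inequality $\bigl\|w^\sharp\int_0^x g\bigr\|_{L_p}\le C(a_n/n)\|gw\|_{L_p}$. The paper does \emph{not} argue by a global duality against a test function $h\in L_{p'}$, nor does it invoke Theorem~3.4(1). Rather, for each fixed $x$ it applies H\"older directly to (3.13) to get $\bigl|\int_0^x g\bigr|\le \|gw\|_{L_p}\,E_{p',n}(w,\phi_x)$ (the weight here is plain $w$, not a modified weight), and then bounds $E_{p',n}(w,\phi_x)$ via the $L_{p'}$ Favard-type inequality of Lemma~4.5, $E_{p',n}(w,\psi)\le C(a_n/n)\|w\psi'\|_{L_{p'}}$, which yields $E_{p',n}(w,\phi_x)\le C(a_n/n)|Q'(x)|^{1/p}w^{-1}(x)$. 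Multiplying by $w^\sharp(x)$ cancels the $|Q'|^{1/p}w^{-1}$ and leaves $(1+|x|)^{-\beta/p}$, whose $L_p(dx)$-norm is finite since $\beta>1$. Theorem~3.4(1) only controls $E_{1,n}$ and does not suffice when $p'<\infty$; once you replace it by Lemma~4.5, your argument coincides with the paper's.
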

If we apply Theorem \ref{Theorem 4.1} with $w_{-1/4}$, then we have the following.
\begin{corollary}\label{Corollary 4.2}
Let $r \ge 0$ be an integer. Let $w=\exp(-Q)\in \mathcal{F}_\lambda(C^{r+3}+)$,   $0< \lambda<(r+3)/(r+2)$,
and let $\beta>0$ be fixed.
Suppose that $wf^{(r)}\in L_p(\mathbb{R})$. Then for $0\le k\le r$ we have
\begin{eqnarray*}
\left\|\left(f^{(k)}-P_{p,n;f,w_{-1/4}}^{(k)}\right)w^{\sharp}_{-(2k+1)/4}
\right\|_{L_p(\mathbb{R})}
&\le& C_r E_{p,n-k}\left(w,f^{(k)}\right)\\
&\le& C_r \left(\frac{a_n}{n}\right)^{r-k}E_{p,n-r}\left(w,f^{(r)}\right).
\end{eqnarray*}
When $w\in\mathcal{F}^*$, we can omit $T^{-(2k+1)/4}$ in the above.
\end{corollary}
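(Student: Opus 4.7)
My plan is to apply Theorem 4.1 with the weight $w$ replaced by $w_{-1/4}$ and then translate the resulting estimate using the identities $T^{\alpha}w_{\beta}\sim w_{\alpha+\beta}$ coming from Theorem 2.2. First I would check the hypotheses of Theorem 4.1 for the weight $w_{-1/4}$. By Theorem 2.2 applied to $w\in\mathcal{F}_\lambda(C^{r+3}+)$, the perturbation $w_{-1/4}$ lies in $\mathcal{F}_\lambda(C^{r+2}+)$; and since $(r+3)/(r+2)<(r+2)/(r+1)$, the standing assumption $0<\lambda<(r+3)/(r+2)$ implies the hypothesis $0<\lambda<(r+2)/(r+1)$ required by Theorem 4.1. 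Next, because $w_{-1/4}\sim T^{-1/4}w$, one has
\[
T^{1/4}(x)f^{(r)}(x)w_{-1/4}(x)\sim f^{(r)}(x)w(x),
\]
so the assumption $wf^{(r)}\in L_p(\mathbb{R})$ is precisely the hypothesis $T^{1/4}f^{(r)}w_{-1/4}\in L_p(\mathbb{R})$ needed to invoke Theorem 4.1.

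Having checked the hypotheses, I would rewrite the conclusion of Theorem 4.1 (applied to $w_{-1/4}$) in terms of $w$. On the error side, $(w_{-1/4})_{1/4}\sim T^{1/4}w_{-1/4}\sim w$, so $E_{p,n-k}((w_{-1/4})_{1/4},f^{(k)})\sim E_{p,n-k}(w,f^{(k)})$. On the left-hand side, the defining relation
\[
(w_{-1/4})^{\sharp}(x)\sim\frac{w_{-1/4}(x)}{\{(1+|Q'(x)|)(1+|x|)^{\beta}\}^{1/p}}\sim T^{-1/4}(x)\,w^{\sharp}(x)
\]
combined with the $T$-scaling of Theorem 2.2 gives
\[
(w_{-1/4})^{\sharp}_{-k/2}(x)\sim T^{-k/2}(x)(w_{-1/4})^{\sharp}(x)\sim T^{-(2k+1)/4}(x)\,w^{\sharp}(x)\sim w^{\sharp}_{-(2k+1)/4}(x).
\]
Substituting these equivalences into the first bound furnished by Theorem 4.1 yields exactly the first inequality asserted in Corollary 4.2.

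For the second inequality I would apply Theorem 3.8 to $f^{(k)}$ with the weight $w$ (with $r-k$ in place of $r$), obtaining
\[
E_{p,n-k}(w,f^{(k)})\le C\Bigl(\frac{a_{n-k}}{n-k}\Bigr)^{r-k}E_{p,n-r}(w,f^{(r)})\sim C\Bigl(\frac{a_n}{n}\Bigr)^{r-k}E_{p,n-r}(w,f^{(r)}),
\]
where the last equivalence follows from Lemma 3.2(1), i.e.\ $a_{n-k}\sim a_n$ for fixed $k$. The Freud case is identical, using that $T$ is bounded so $T^{-(2k+1)/4}\sim 1$, which lets us replace $w^{\sharp}_{-(2k+1)/4}$ by $w^{\sharp}$. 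I do not expect any real obstacle beyond bookkeeping; the only delicate point is tracking the two compositions of the subscript-perturbation operation and the $\sharp$ operation, which is handled uniformly by the equivalence statements in Theorem 2.2.
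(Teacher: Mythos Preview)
Your proposal is correct and follows exactly the approach the paper takes: the paper's entire proof is the single line ``It follows from Theorem~4.1,'' and the text immediately preceding the corollary already announces ``If we apply Theorem~4.1 with $w_{-1/4}$, then we have the following.'' Your write-up is simply a careful unpacking of that substitution, verifying the hypotheses via Theorem~2.2 and tracking the subscript/$\sharp$ equivalences, so there is nothing to add.
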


\begin{corollary}\label{Corollary 4.3}
Let $r \ge 0$ be an integer. Let $w=\exp(-Q)\in \mathcal{F}_\lambda(C^{r+3}+)$,
$0< \lambda<(r+3)/(r+2)$,
and let $\beta>0$ be fixed.
\item[\,\,\,(1)] Let $w_{(2r+1)/4}f^{(r)}\in L_p(\mathbb{R})$.
Then, for each $k$   $(0\le k\le r)$ and  the best approximation polynomial $P_{p,n;f,w_{k/2}}$, we have
\begin{eqnarray*}
\left\|\left(f^{(k)}-P_{p,n;f,w_{k/2}}^{(k)}\right)w^{\sharp}\right\|_{L_p(\mathbb{R})}
&\le& C_r E_{p,n-k}\left(w_{(2k+1)/4},f^{(k)}\right)\\
&\le& C_r \left(\frac{a_n}{n}\right)^{r-k}E_{p,n-r}\left(w_{(2k+1)/4},f^{(r)}\right).
\end{eqnarray*}
\item[\,\,\,(2)] Let $w^{\flat}_{(2r+1)/4}f^{(r)}\in L_p(\mathbb{R})$.
Then, for each $k$ $(0\le k\le r)$ and
 the best approximation polynomial $P_{p,n;f,w^{\flat}_{k/2}}$, we have
\begin{eqnarray*}
\left\|\left(f^{(k)}-P_{p,n;f,w^{\flat}_{k/2}}^{(k)}\right)w\right\|_{L_p(\mathbb{R})}
&\le& C_r E_{p,n-k}\left(w^{\flat}_{(2k+1)/4},f^{(k)}\right)\\
&\le& C_r \left(\frac{a_n}{n}\right)^{r-k}E_{p,n-r}\left(w^{\flat}_{(2k+1)/4},f^{(r)}\right).
\end{eqnarray*}
When $w\in\mathcal{F}^*$, we can replace $w_{(2k+1)/4}$ and $w^{\flat}_{(2k+1)/4}$
with $w$ and $w^{\flat}$, respectively in the above.
\end{corollary}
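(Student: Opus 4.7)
The plan is to derive this corollary directly from Theorem \ref{Theorem 4.1} by applying it with a rescaled base weight, exactly as Corollary \ref{Corollary 2.5} was deduced from Theorem \ref{Theorem 2.3}. For part (1) I would invoke Theorem \ref{Theorem 4.1} with $w_{k/2}$ in place of $w$, and for part (2) with $w^{\flat}_{k/2}$ in place of $w$. Theorem \ref{Theorem 2.2} ensures the resulting weights lie in $\mathcal{F}_\lambda(C^{r+2}+)$ and that their Mhaskar function is comparable to the original $T_w$, so the constants produced by Theorem \ref{Theorem 4.1} remain uniform in $k$.

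The bookkeeping step is to verify that the decorations absorb correctly. Using $w_\alpha(x) \sim T^\alpha(x) w(x)$ and the definition $w^\sharp(x) \sim w(x)/\{(1+|Q'(x)|)(1+|x|)^\beta\}^{1/p}$, one computes $(w_{k/2})^\sharp_{-k/2} \sim T^{-k/2}\cdot T^{k/2} w^\sharp = w^\sharp$ and $(w_{k/2})_{1/4} \sim T^{(2k+1)/4} w = w_{(2k+1)/4}$. For part (2), the defining identity $w^\flat \sim w \cdot \{(1+|Q'|)(1+|\cdot|)^\beta\}^{1/p}$ combined with the division built into $\sharp$ yields $(w^\flat_{k/2})^\sharp_{-k/2} \sim w$ and $(w^\flat_{k/2})_{1/4} \sim w^\flat_{(2k+1)/4}$. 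The hypothesis $T^{1/4} f^{(r)} v \in L_p(\mathbb{R})$ required to invoke Theorem \ref{Theorem 4.1} with base weight $v$ reduces to $w_{(2k+1)/4} f^{(r)} \in L_p$ (or its $\flat$-analogue), which follows from the stated assumption because $T \ge \Lambda > 1$ forces $T^{(2k+1)/4} \le T^{(2r+1)/4}$ pointwise for $0 \le k \le r$.

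With these identifications in hand, taking $j=k$ in the conclusion of Theorem \ref{Theorem 4.1} yields the first inequality in each part of the corollary. The second inequality is then just Theorem \ref{Theorem 3.8} applied to $f^{(k)}$ with weight $v = w_{(2k+1)/4}$ or $w^\flat_{(2k+1)/4}$, combined with the comparability $a_n \sim a_{n-k}$ from Lemma \ref{Lemma 3.2}(1) to convert $(a_{n-k}/(n-k))^{r-k}$ to $(a_n/n)^{r-k}$. The Freud case $w \in \mathcal{F}^*$ is handled identically since $T$ is bounded and every $T^\alpha$-factor collapses into a constant. I do not foresee any genuine obstacle beyond the bookkeeping of the weight identities, which is the whole content of the argument.
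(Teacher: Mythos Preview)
Your proposal is correct and follows exactly the same approach as the paper's own proof, which consists of the single sentence ``If we apply Theorem~\ref{Theorem 4.1} with $w_{k/2}$ and $w^{\flat}_{k/2}$, then we can obtain the results.'' You have in fact supplied considerably more of the weight-identity bookkeeping than the paper does.
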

Especially when $p=\infty$, we can refer to  $w^{\sharp}$ or $w^{\flat}$ as $w$.
In this case, we can note that Corollary \ref{Corollary 4.2} and Corollary \ref{Corollary 4.3}
imply Corollary \ref{Corollary 2.4}, and Corollary \ref{Corollary 2.5}, respectively.

To prove Theorem \ref{Theorem 4.1} we need to prepare some notations and lemmas.
\begin{lemma}[{\cite[Lemma 3.8]{[6]}}] \label{Lemma 4.4}
Let $w\in \mathcal{F}(C^2+)$ and let $1\le p\le \infty$. If $g:\mathbb{R}\rightarrow \mathbb{R}$ is absolutely continuous,
$g(0)=0$, and $wg'\in L_p(\mathbb{R})$, then
\begin{equation*}
 \left\|Q'wg\right\|_{L_p(\mathbb{R})}\le C\left\|wg'\right\|_{L_p(\mathbb{R})}.
\end{equation*}
\end{lemma}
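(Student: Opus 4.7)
Since $Q$ is even and $g(0)=0$, it suffices to estimate the integral over $[0,\infty)$; the $(-\infty,0]$ part is symmetric. Write $g(x)=\int_0^x g'(t)\,dt$, so for $x>0$
\[
|Q'(x)w(x)g(x)| \le Q'(x)w(x)\int_0^x w(t)^{-1}\,|w(t)g'(t)|\,dt.
\]
Setting $h(t):=w(t)|g'(t)|\ge0$, the task becomes the weighted Hardy-type inequality
\[
\left\|Q'(x)w(x)\int_0^x w(t)^{-1}h(t)\,dt\right\|_{L_p(0,\infty)}\le C\,\|h\|_{L_p(0,\infty)}.
\]

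The plan is to handle the three ranges $p=\infty$, $p=1$, $1<p<\infty$ separately, each reducing to the basic estimate
\begin{equation}\label{basic-ibp}
\int_0^r e^{\alpha Q(t)}\,dt \;\lesssim\; \frac{e^{\alpha Q(r)}}{\alpha Q'(r)}\quad\text{and}\quad
\int_r^{\infty} (Q'(x))^{\beta-1} Q'(x) e^{-\beta Q(x)}\,dx \;\lesssim\; \frac{Q'(r)^{\beta-1}}{\beta}\,e^{-\beta Q(r)},
\end{equation}
for $r$ large and $\alpha,\beta>0$. Both are proved by one integration by parts against the exact differential $Q'(t)e^{\pm\alpha Q(t)}\,dt$, combined with the absorption argument coming from
\[
\frac{Q''(t)}{Q'(t)^2}\le C_1\frac{1}{Q(t)}\longrightarrow 0,\qquad t\to\infty,
\]
which is Definition \ref{Definition 1.1}(e). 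On the bounded part $[0,a]$ the integrand $Q'(x)w(x)\int_0^x w(t)^{-1}\,dt$ is continuous and tends to $0$ as $x\to 0^+$, so it contributes only a harmless bounded term.

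For $p=\infty$ the estimate \eqref{basic-ibp} (with $\alpha=1$) yields $Q'(x)w(x)\int_0^x w(t)^{-1}\,dt\le C$, giving the pointwise bound at once. For $p=1$, Fubini with $\int_0^\infty Q'(x)w(x)\,dx<\infty$ reproduces the estimate already obtained in \eqref{3.6}. For $1<p<\infty$ I would appeal to Muckenhoupt's weighted Hardy inequality with $U(x)=Q'(x)w(x)$ and $V(t)=w(t)$: the criterion to verify is
\[
\sup_{r>0}\Bigl(\int_r^{\infty} U(x)^p\,dx\Bigr)^{\!1/p}\Bigl(\int_0^{r} V(t)^{-p'}\,dt\Bigr)^{\!1/p'}<\infty.
\]
Applying \eqref{basic-ibp} with $\beta=p$ (to the first factor) and with $\alpha=p'$ (to the second) produces, for $r$ large,
\[
\Bigl(C\,Q'(r)^{p-1}e^{-pQ(r)}\Bigr)^{1/p}\cdot\Bigl(C\,\tfrac{e^{p'Q(r)}}{Q'(r)}\Bigr)^{1/p'}=C\,Q'(r)^{(p-1)/p-1/p'}=C,
\]
since $(p-1)/p=1/p'$. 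The sup over small $r$ is finite because $Q'(r)\to 0$ but the outer factor $\int_r^\infty (Q'w)^p$ stays bounded; monotone dependence on $r$ completes this step.

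The only step that requires real care is the absorption argument for \eqref{basic-ibp}: after one integration by parts one obtains a residual integral controlled by $\tfrac{C}{Q(r)}$ times the left side, and one needs $Q(r)$ large to absorb it back. This is exactly where hypothesis (e) of Definition \ref{Definition 1.1} is used, and it is the main technical point of the argument; the rest is Hardy's inequality plus symmetry in $x\mapsto -x$.
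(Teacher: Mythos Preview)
The paper does not prove Lemma \ref{Lemma 4.4}; it is quoted directly from \cite[Lemma 3.8]{[6]}. Your argument is correct and self-contained: reducing to a one-sided weighted Hardy inequality and verifying Muckenhoupt's criterion via the integration-by-parts absorption that Definition \ref{Definition 1.1}(e) provides is the standard route for this class of weights. Note that your $p=1$ case is exactly the computation the paper already carries out as \eqref{3.6} in the proof of Theorem \ref{Theorem 3.4}.

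Two small points worth tightening. First, for the Muckenhoupt supremum over small $r$ your phrasing is loose; it is cleanest to say that the product of the two factors is continuous in $r$ on $(0,R]$ and tends to $0$ as $r\to 0^+$ (the second factor vanishes while the first is bounded by $\int_0^\infty (Q'w)^p<\infty$), hence is bounded there. Second, in the second integration-by-parts estimate of your display you implicitly use that the boundary term $Q'(x)^{\beta-1}e^{-\beta Q(x)}$ vanishes at infinity; this does hold, since integrating condition (e) from a fixed point gives $Q'(x)\le C\,Q(x)^{C_1}$, but it deserves a one-line justification.
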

\begin{lemma}[cf.{\cite[Theorem 4.1]{[9]}}]\label{Lemma 4.5}
Let $w\in \mathcal{F}(C^2+)$ and let $1\le p\le \infty$. If $wf'\in L_p(\mathbb{R})$, then
\begin{equation*}
 E_{p,n}(w,f)\le C\omega_p\left(f,w,\frac{a_n}{n}\right)
 \le C \frac{a_n}{n}\left\|wf'\right\|_{L_p(\mathbb{R})}.
\end{equation*}
\end{lemma}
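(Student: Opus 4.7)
The first inequality is immediate: it is just Proposition \ref{Proposition 3.3} applied with $t=C_2 a_n/n$. So the content lies in proving
\[
\omega_p\!\left(f,w,t\right)\le C\,t\,\|wf'\|_{L_p(\mathbb{R})},
\]
with $t=a_n/n$. The plan is to bound the two pieces of $\omega_p(f,w,t)$ separately by $Ct\|wf'\|_{L_p(\mathbb{R})}$ and add.

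For the modulus piece, I would first note that $\sigma$ is decreasing, so $\sigma(2t)\le\sigma(t)$ and thus $|x|/\sigma(t)\le 1$ on $|x|\le\sigma(2t)$; combined with $T\ge\Lambda>1$ this gives $\Phi_t(x)\le 2$. Writing
\[
f\!\left(x+\tfrac{h}{2}\Phi_t(x)\right)-f\!\left(x-\tfrac{h}{2}\Phi_t(x)\right)=\int_{x-h\Phi_t(x)/2}^{x+h\Phi_t(x)/2}f'(s)\,ds
\]
and using Lemma \ref{Lemma 3.5}(2) (which gives $w(s)\sim w(x)$ on the integration interval since $h\Phi_t(x)/2\le t\Phi_t(x)$), I would apply Hölder in $s$ to pull out a factor $(h\Phi_t(x))^{1/p'}\le Ct^{1/p'}$, then use Fubini to move the integration in $x$ inside. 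The outer $x$-integral over $\{x:|x-s|\le h\Phi_t(x)/2\}$ has length $O(t)$, producing an extra factor $t^{1/p}$, so altogether this piece is $\le Ct\|wf'\|_{L_p(\mathbb{R})}$. For $p=\infty$ the same argument is immediate without Hölder: $|w(x)\!\int f'|\le Ch\Phi_t(x)\|wf'\|_\infty\le Ct\|wf'\|_\infty$.

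For the tail piece, I would choose $c=f(0)$ and apply Lemma \ref{Lemma 4.4} to $g:=f-f(0)$, obtaining $\|Q'w(f-f(0))\|_{L_p(\mathbb{R})}\le C\|wf'\|_{L_p(\mathbb{R})}$. Since $|Q'|$ is increasing on $\mathbb{R}^+$ and even, for $|x|\ge\sigma(4t)$ we have $1/|Q'(x)|\le 1/Q'(\sigma(4t))$, and by (\ref{3.5}) $1/Q'(\sigma(4t))\sim 4t/\sqrt{T(\sigma(4t))}\le Ct$. Therefore
\[
\inf_{c\in\mathbb{R}}\|w(f-c)\|_{L_p(|x|\ge\sigma(4t))}\le \frac{1}{Q'(\sigma(4t))}\|Q'w(f-f(0))\|_{L_p(\mathbb{R})}\le Ct\|wf'\|_{L_p(\mathbb{R})}.
\]
Adding the two bounds and setting $t=a_n/n$ finishes the proof.

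The main obstacle is the $L_p$ bound on the modulus piece for $1<p<\infty$, which requires coordinating Hölder's inequality, the weight-equivalence Lemma \ref{Lemma 3.5}(2) (valid precisely because $h\Phi_t(x)$ is small compared to the natural local scale at $x$), and Fubini. For $p=1$ the argument reduces essentially to the computation already carried out in the proof of Theorem \ref{Theorem 3.4}(1), and for $p=\infty$ it is immediate; the non-trivial interpolation across $p$ is what the Hölder/Fubini argument handles cleanly.
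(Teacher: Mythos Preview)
Your proof is correct and follows the same overall strategy as the paper: invoke Proposition \ref{Proposition 3.3} for the first inequality, then bound the two pieces of $\omega_p(f,w,t)$ separately. The tail piece is handled identically---choose $c=f(0)$, apply Lemma \ref{Lemma 4.4} to $g=f-f(0)$, and use (\ref{3.5}) to convert $1/Q'(\sigma(4t))$ into $O(t)$.

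The only difference is in the modulus piece. The paper uses the mean value theorem directly, writing
\[
f\!\left(x+\tfrac{h}{2}\Phi_t(x)\right)-f\!\left(x-\tfrac{h}{2}\Phi_t(x)\right)=h\,\Phi_t(x)\,f'(x_t)
\]
for some intermediate point $x_t$, then invoking Lemma \ref{Lemma 3.5}(2) to replace $w(x)$ by $w(x_t)$ and bounding $\Phi_t\le C$; this yields $\int|w(x)\Phi_t(x)f'(x_t)|^p\,dx\le C\int|w f'|^p$ in one line. Your H\"older/Fubini argument reaches the same bound but is more explicit about how the $x$-integration is controlled, avoiding the tacit change-of-variables $x\mapsto x_t$ that the paper's version leaves unexamined. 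Either route is standard for this type of estimate; yours is somewhat more careful, the paper's is terser.
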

\begin{proof}
The first inequality follows from Proposition \ref{Proposition 3.3}.
We show the second inequality.  By \cite[Lemma 7]{[9]} we have
\begin{eqnarray*}
&&\left\|w(x)\left\{f\left(x+\frac{h}{2}\Phi_t(x)\right)-f\left(x-\frac{h}{2}\Phi_t(x)\right)\right\}
\right\|^p_{L_p(|x|\le \sigma(2t))} \\
&=&h^p\int_{\mathbb{R}}|w(x)\Phi_t(x)f'(x_t)|^pdx
\le Ch^p\int_{\mathbb{R}}|w(x)f'(x)|^pdx.
\end{eqnarray*}
Hence we see
\begin{equation}\label{4.1}
\sup_{0<h\le t}\left\|w(x)\left\{f\left(x+\frac{h}{2}\Phi_t(x)\right)
-f\left(x-\frac{h}{2}\Phi_t(x)\right)\right\}\right\|_{L_p(|x|\le \sigma(2t))}
\le Ct\left\|wf'\right\|_{L_p(\mathbb{R})}.
\end{equation}
Now, we estimate
\begin{equation*}
\left\|w(x)(f-c)(x)\right\|_{L_p(|x|\ge \sigma(4t))}.
\end{equation*}
Let $g(x):=f(x)-f(0)$.
\begin{equation*}\label{4.2*}
\inf_{c\in\mathbb{R}}\left\|w(x)\left(f-c\right)(x)\right\|_{L_p(|x|\ge \sigma(4t))}
\le \frac{1}{Q'(\sigma(4t))} \left\|Q'wg\right\|_{L_p(\mathbb{R})}.
\end{equation*}
Then we have from (\ref{3.5}),
\begin{equation}\label{4.2}
\inf_{c\in\mathbb{R}}\left\|w(x)(f-c)(x)\right\|_{L_p(|x|\ge \sigma(4t))}
\le Ct\|Q'wg\|_{L_p(\mathbb{R})}.
\end{equation}
Here, from Lemma \ref{Lemma 4.4} we have
\begin{equation}\label{4.3}
 \|Q'w(f-f(0))\|_{L_p(\mathbb{R})}\le C\|wf'\|_{L_p(\mathbb{R})}.
\end{equation}
From (\ref{4.2}) and (\ref{4.3}) we have
\begin{equation}\label{4.4}
\inf_{c\in\mathbb{R}}\left\|w(x)(f-c)(x)\right\|_{L_p(|x|\ge \sigma(4t))}\le Ct \|wf'\|_{L_p(\mathbb{R})}.
\end{equation}
From (\ref{4.1}) and (\ref{4.4}) we have the result.
\end{proof}
\begin{lemma}[cf.{\cite[Lemma 4.4]{[4]}}]\label{Lemma 4.6}
Let $1 \le p \le \infty$ and $\beta>1$, and let us define $w^{\sharp}$ with $p$, $\beta$.
Let $g$ be a real valued function on $\mathbb{R}$ satisfying $\|gw\|_{L_p(\mathbb{R})}<\infty$ and
(\ref{3.9}),
then we have
\begin{equation}\label{4.5}
\left\|w^{\sharp}(x)\int_0^x g(t)dt\right\|_{L_p(\mathbb{R})}
\le C \frac{a_n}{n}\|gw\|_{L_p(\mathbb{R})}.
\end{equation}
Especially, if $w\in \mathcal{F}_\lambda(C^3+),   0<\lambda<3/2$, then we have
\begin{equation}\label{4.6}
 \left\|w^{\sharp}(x)\int_0^x \left(f'(t)-v_n(f')(t)\right)dt\right\|_{L_p(\mathbb{R})}
 \le C \frac{a_n}{n}E_{p,n}\left(w_{1/4},f'\right).
\end{equation}
When $w\in \mathcal{F}^*$, we also have (\ref{4.6}) replacing $w_{1/4}$ with $w$.
\end{lemma}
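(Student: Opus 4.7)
The plan is to adapt the strategy of Lemma 3.6 to the $L_p$ setting by invoking $L_p$--$L_q$ duality in place of the pointwise H\"older argument. For (4.5), set $h(x):=\int_0^x g(t)\,dt$ and $1/p+1/q=1$. By duality,
\[
\|w^{\sharp}h\|_{L_p(\mathbb{R})}=\sup_{\|\psi\|_{L_q(\mathbb{R})}\le 1}\left|\int_{\mathbb{R}} w^{\sharp}(x)\psi(x)h(x)\,dx\right|.
\]
Fix such a $\psi$ and apply Fubini: the inner integral equals $\int g(t)\Psi(t)\,dt$, where $\Psi$ is the piecewise antiderivative given by $\Psi(t)=\int_t^{\infty}w^{\sharp}\psi\,ds$ for $t>0$ and $\Psi(t)=-\int_{-\infty}^t w^{\sharp}\psi\,ds$ for $t<0$; in particular $\Psi'=-w^{\sharp}\psi$ almost everywhere.

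Next I use the orthogonality (3.9) to write $\int g\Psi\,dt=\int g(\Psi-Pw^2)\,dt$ for any $P\in\mathcal{P}_n$. H\"older's inequality gives the bound $\|gw\|_{L_p}\cdot E_{q,n}(w,\Psi/w^2)$ after taking the infimum over $P$. Applying Lemma 4.5 to $F:=\Psi/w^2$ then yields
\[
E_{q,n}(w,F)\le C\frac{a_n}{n}\|wF'\|_{L_q}=C\frac{a_n}{n}\left\|\frac{\Psi'+2Q'\Psi}{w}\right\|_{L_q}.
\]
The term $\|\Psi'/w\|_{L_q}=\|w^{\sharp}\psi/w\|_{L_q}$ is bounded by $C\|\psi\|_{L_q}$ because $w^{\sharp}/w$ is uniformly bounded, directly from the definition of $w^{\sharp}$. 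The term $\|Q'\Psi/w\|_{L_q}$ is the technical core of the proof; I plan to control it by a Hardy-type inequality in the spirit of Lemma 4.4, where the hypothesis $\beta>1$ on $w^{\sharp}$ is precisely what makes the corresponding Muckenhoupt condition hold for the Hardy operator $\psi\mapsto\int_t^{\infty}w^{\sharp}\psi\,ds$. Combining these estimates and taking the supremum over $\psi$ gives (4.5).

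For (4.6) I apply (4.5) with $g:=f'-v_n(f')$. The de la Vall\'ee Poussin construction implies that for each $j>n$ the partial sum $s_j(f')$ agrees with $f'$ on $\mathcal{P}_{j-1}\supset\mathcal{P}_n$ with respect to the $w^2$-inner product, so $\int(f'-v_n(f'))Pw^2\,dt=0$ for every $P\in\mathcal{P}_n$; that is, condition (3.9) holds. Combining (4.5) with Proposition 3.1 then yields
\[
\left\|w^{\sharp}\int_0^x(f'(t)-v_n(f')(t))\,dt\right\|_{L_p}\le C\frac{a_n}{n}\|(f'-v_n(f'))w\|_{L_p}\le C\frac{a_n}{n}E_{p,n}(w_{1/4},f'),
\]
which is (4.6); the Freud-case statement follows from the corresponding case of Proposition 3.1. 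The main obstacle I anticipate is the Hardy-type bound for $\|Q'\Psi/w\|_{L_q}$: the piecewise definition of $\Psi$ (with a possible jump at the origin) requires care, and the verification of the underlying Muckenhoupt condition depends crucially on the hypothesis $\beta>1$ built into the definition of $w^{\sharp}$.
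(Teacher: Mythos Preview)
Your deduction of (4.6) from (4.5) matches the paper's exactly. For (4.5), however, the paper takes a much shorter and more elementary route than your duality argument.

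The paper does \emph{not} dualize. Instead it keeps the pointwise argument of Lemma~3.6: for fixed $x$ it uses the same test function $\phi_x$ of (3.12), applies (3.13) together with H\"older's inequality to obtain
\[
\Bigl|\int_0^x g(t)\,dt\Bigr|\le \|gw\|_{L_p(\mathbb{R})}\,E_{q,n}(w,\phi_x),\qquad \tfrac1p+\tfrac1q=1,
\]
and then bounds $E_{q,n}(w,\phi_x)$ via Lemma~4.5 and the explicit computation
$\|w\phi_x'\|_{L_q}\le C|Q'(x)|^{1/p}w^{-1}(x)$.
This yields the \emph{pointwise} estimate
\[
w^{\sharp}(x)\Bigl|\int_0^x g(t)\,dt\Bigr|\le C\,\frac{a_n}{n}\,\|gw\|_{L_p(\mathbb{R})}\,(1+|x|)^{-\beta/p},
\]
and taking the $L_p$ norm (finite because $\beta>1$) gives (4.5) immediately. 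Thus the factor $(1+|Q'(x)|)^{1/p}$ in $w^{\sharp}$ absorbs the $|Q'(x)|^{1/p}$ produced by the $L_q$ computation, and the factor $(1+|x|)^{\beta/p}$ provides the integrability; both roles are completely transparent.

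Your route is sound in outline, but it trades this direct computation for a genuine weighted Hardy inequality for the operator $\psi\mapsto Q'\Psi/w$, which is not available from any lemma in the paper (Lemma~4.4 does not apply directly, since $\Psi/w^2$ fails $G(0)=0$ and has a jump at the origin). Verifying the Muckenhoupt condition is doable but is precisely the extra work the paper's pointwise approach avoids. In short: your argument can be completed, but the paper's method reaches the conclusion with no new analytic ingredient beyond H\"older and Lemma~4.5.
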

\begin{proof}
For arbitrary $P_n\in\mathcal{P}_n$,  we have by (\ref{3.13}) and H\"older inequality
\begin{equation*}
\left|\int_0^x g(t)dt\right|
\le \left\|g(t)w(t)\right\|_{L_p(\mathbb{R})}
E_{q,n}\left(w,\phi_x\right), \quad 1\le p \le \infty,   1/p+1/q=1,
\end{equation*}
where $\phi$ is defined in (\ref{3.12}).
Then, we obtain by Lemma \ref{Lemma 4.5},
\begin{eqnarray*}
\left|\int_0^x g(t)dt\right|
&\le& \left\|g(t)w(t)\right\|_{L_p(\mathbb{R})}
\frac{a_n}{n}
\left(\int_{\mathbb{R}}|w(t)\phi'_x(t)|^qdt\right)^{1/q} \\
&\le& \frac{a_n}{n}\left\|g(t)w(t)\right\|_{L_p(\mathbb{R})}
|Q'(x)|^{1-1/q}\left(\int_0^x Q'(t)w^{-q}(t)dt\right)^{1/q}\\
&\le& C\frac{a_n}{n}\left\|g(t)w(t)\right\|_{L_p(\mathbb{R})}
|Q'(x)|^{1-1/q}w^{-1}(x).
\end{eqnarray*}
Here, for $p=1$ we may consider
\[
\lim_{q\to \infty} \left( \int_0^xQ'(t)w^{-q}(t)dt\right)^{1/q}
=\lim_{q\to \infty} \left( w^{-q}(t)\right)^{1/q}=w^{-1}(x).
\]
Hence, we have
\begin{eqnarray*}
\left\|\frac{w(x)}{\left\{(1+|Q'(x)|)(1+|x|)^{\beta}\right\}^{1/p}}\int_0^x g(t)dt\right\|_{L_p(\mathbb{R})}
&\le& C\left(\frac{a_n}{n}\right)\left\|\left(1+|x|\right)^{-\beta/p}\right\|_{L_p(\mathbb{R})}
\left\|gw\right\|_{L_p(\mathbb{R})}\\
&\le& C\left(\frac{a_n}{n}\right)\left\|gw\right\|_{L_p(\mathbb{R})}.
\end{eqnarray*}
Therefore, we have (\ref{4.5}). From (\ref{4.5}), (\ref{3.9}) and Proposition \ref{Proposition 3.1},
we have (\ref{4.6}).
\end{proof}
\begin{lemma}\label{Lemma 4.7}
Let $w=\exp(-Q)\in\mathcal{F}_\lambda(C^3+)$,   $0<\lambda<3/2$.
Let $1 \le p \le \infty$, $\|w_{1/4}f'\|_{L_p(\mathbb{R})}<\infty$,
and let $q_{n-1}\in \mathcal{P}_n$ be the best approximation of $f'$ with respect to the weight $w$
on $L_p(\mathbb{R})$ space,
that is,
\begin{equation*}
\left\|(f'-q_{n-1})w\right\|_{L_p(\mathbb{R})}=E_{p,n-1}(w,f').
\end{equation*}
Using $q_{n-1}$, define $F(x)$ and $S_{2n}$  as (\ref{3.17}) and (\ref{3.14}).
Then we have
\begin{equation*}
 \left\|w^{\sharp}(F-S_{2n})\right\|_{L_p(\mathbb{R})}\le C  \frac{a_n}{n}E_{p,n}\left(w_{1/4},f'\right),
\end{equation*}
and
\begin{equation*}
 \left\|wS_{2n}'\right\|_{L_p(\mathbb{R})}\le C E_{p,n-1}\left(w_{1/4},f'\right).
\end{equation*}
When $w\in \mathcal{F}^*$, we also have same results replacing $w_{1/4}$ with $w$.
\end{lemma}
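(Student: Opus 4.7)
The plan is to mirror the proof of Lemma \ref{Lemma 3.7} step by step, replacing $L_\infty$-bounds with $L_p$-bounds and substituting Lemma \ref{Lemma 4.6} for Lemma \ref{Lemma 3.6}. The definitions are
\[
F(x)=f(x)-\int_0^x q_{n-1}(t)\,dt,\qquad
S_{2n}(x)=f(0)+\int_0^x v_n(f'-q_{n-1})(t)\,dt,
\]
so the first task is to simplify $F-S_{2n}$. Since $v_n$ is built from partial sums $s_j$ with $j\ge n+1$, it reproduces every polynomial of degree $\le n$; in particular $v_n(q_{n-1})=q_{n-1}$, giving $v_n(f'-q_{n-1})=v_n(f')-q_{n-1}$. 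A direct computation then collapses to
\[
F(x)-S_{2n}(x)=\int_0^x\bigl(f'(t)-v_n(f')(t)\bigr)\,dt.
\]

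Second, I would apply Lemma \ref{Lemma 4.6} inequality (\ref{4.6}) with the function $f$ there replaced by the present $f$. This yields immediately
\[
\bigl\|w^{\sharp}\bigl(F-S_{2n}\bigr)\bigr\|_{L_p(\mathbb{R})}
\le C\,\frac{a_n}{n}\,E_{p,n}(w_{1/4},f'),
\]
which is the first conclusion. (In the Freud case $w\in\mathcal{F}^\ast$, Lemma \ref{Lemma 4.6} automatically supplies the same bound with $w_{1/4}$ replaced by $w$.)

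Third, for the derivative bound, differentiating gives $S_{2n}'=v_n(f'-q_{n-1})=v_n(f')-q_{n-1}$, so by the triangle inequality
\[
\|wS_{2n}'\|_{L_p(\mathbb{R})}
\le \bigl\|(v_n(f')-f')w\bigr\|_{L_p(\mathbb{R})}+\bigl\|(f'-q_{n-1})w\bigr\|_{L_p(\mathbb{R})}.
\]
The first term is controlled by Proposition \ref{Proposition 3.1} inequality (\ref{3.1}) with $f$ replaced by $f'$, giving $C\,E_{p,n}(w_{1/4},f')$; the second is exactly $E_{p,n-1}(w,f')$ by the definition of $q_{n-1}$. Both are bounded above by $C\,E_{p,n-1}(w_{1/4},f')$ since $w_{1/4}$ pointwise dominates $w$ (up to a constant on any compact set and by $T\ge\Lambda>1$ globally) and $E_{p,n}\le E_{p,n-1}$.

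The main obstacle, such as it is, lies not in any single estimate but in verifying the book-keeping: one must check that the reproducing property $v_n(q_{n-1})=q_{n-1}$ applies (using $\deg q_{n-1}\le n-1$ so that $s_j(q_{n-1})=q_{n-1}$ for all $j\ge n+1$), and that the hypothesis $\|w_{1/4}f'\|_{L_p(\mathbb{R})}<\infty$ is strong enough to invoke Proposition \ref{Proposition 3.1} and Lemma \ref{Lemma 4.6}. Once these are in place the argument is a routine transcription of the Lemma \ref{Lemma 3.7} proof into the $L_p$-setting.
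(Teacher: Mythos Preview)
Your proposal is correct and follows essentially the same approach as the paper: the paper's own proof of Lemma~\ref{Lemma 4.7} simply states that the result follows by the method of Lemma~\ref{Lemma 3.7} using Lemma~\ref{Lemma 4.6}~(\ref{4.6}), which is precisely what you carry out in detail. Your explicit verification of the reproducing property $v_n(q_{n-1})=q_{n-1}$ and of the inequality $E_{p,n-1}(w,f')\le C\,E_{p,n-1}(w_{1/4},f')$ (via $w\le C\,w_{1/4}$ from $T\ge\Lambda>1$) fills in exactly the book-keeping the paper leaves implicit.
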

\begin{proof}
By Lemma \ref{Lemma 4.6} (\ref{4.6}), we have the result using the same method as the proof of Lemma \ref{Lemma 3.7}.
\end{proof}
\begin{proof}[Proof of Theorem \ref{Theorem 4.1}]
We will prove it similarly to the proof of Theorem \ref{Theorem 2.3}.
First, let $q_{n-1}\in\mathcal{P}_{n-1}$ be the polynomial of best approximation of  $f'$ with respect to the weight $w$
on $L_p(\mathbb{R})$ space.
Then using $q_{n-1}$, we define $F(x)$ and $S_{2n}$ in the same method as (\ref{3.17}) and (\ref{3.14}).
Then we have using Lemma \ref{Lemma 4.7}
\begin{equation*}
\left\|F'w\right\|_{L_p(\mathbb{R})}= E_{p,n-1}\left(w,f'\right),
\end{equation*}
\begin{equation*}
 \|(F-S_{2n})w^{\sharp}\|_{L_p(\mathbb{R})}\le C  \frac{a_n}{n}E_{p,n}\left(w_{1/4},f'\right)
\end{equation*}
and
\begin{equation}\label{4.7}
 \left\|S_{2n}'w \right\|_{L_p(\mathbb{R})}\le C E_{p,n-1}\left(w_{1/4},f'\right).
\end{equation}
Then we see from Theorem \ref{Theorem 3.9} and  (\ref{4.7}),
\begin{equation}\label{4.8}
 \left\|S_{2n}^{(k)}w_{-(k-1)/2}\right\|_{L_p(\mathbb{R})}
\le C \left(\frac{n}{a_n}\right)^{k-1}E_{p,n-1}\left(w_{1/4},f'\right)
\end{equation}
and using $w^{\sharp} \le w$ and Theorem \ref{Theorem 3.8}
\begin{equation}\label{4.9}
\left\|\left(R_{n}^{(k)}-S_{2n}^{(k)}\right) w^{\sharp}_{-k/2}\right\|_{L_p(\mathbb{R})}
\le C \left(\frac{n}{a_n}\right)^{k-1}E_{p,n-1}\left(w_{1/4},f'\right),
\end{equation}
where $R_{n}\in \mathcal{P}_{n}$ denotes the polynomial of best approximation of  $F$
with $w$ on $L_p(\mathbb{R})$ space(by the similar calculation as (\ref{3.20}) and (\ref{3.21})).
Then, we see $w^{\sharp}_{-k/2}(x) \le w_{-(k-1)/2}(x)$.
By (\ref{4.8}) and (\ref{4.9}) and Theorem \ref{Theorem 3.8}, we have
\begin{equation}\label{4.10}
\left\|R_{n}^{(k)}w^{\sharp}_{-k/2}\right\|_{L_p(\mathbb{R})}
\le C \left(\frac{n}{a_n}\right)^{k-1}E_{p,n-1}\left(w_{1/4},f'\right)
\le C E_{p,n-k}\left(w_{1/4},f^{(k)}\right).
\end{equation}
By the same reason to (\ref{3.24}), we know that
$P_{p,n;f,w}:=Q_n+R_n$ is the polynomial of best approximation of $f$ with $w$ on $L_p(\mathbb{R})$ space.
Therefore,
using $P_{p,n;f,w}$, (\ref{4.10}) and the method of mathematical induction,
we have for $1\le k\le r+1$,
\begin{equation*}
\left\|\left(f^{(k)}-P_{p,n;f,w}^{(k)}\right)w^{\sharp}_{-k/2}\right\|_{L_p(\mathbb{R})}
\le C E_{p,n-k}\left(w_{1/4},f^{(k)}\right).
\end{equation*}
\end{proof}
\begin{proof}[Proof of Corollary \ref{Corollary 4.2}]
It follows from Theorem \ref{Theorem 4.1}.
\end{proof}
\begin{proof}[Proof of Corollary \ref{Corollary 4.3}]
If we apply Theorem \ref{Theorem 4.1} with $w_{k/2}$ and $w^{\flat}_{k/2}$,
then we can obtain the results.
\end{proof}

\setcounter{equation}{0}
\section{Monotone Approximation}

Let $r>0$ be an integer. Let $k$ and $\ell$ be integers with $0\le k\le \ell\le r$.
In this section, we consider a real function $f$ on $\mathbb{R}$
such that $f^{(r)}(x)$ is continuous in $\mathbb{R}$
and we let $a_j(x)$,   $j=k,k+1,...,\ell$ be bounded on $\mathbb{R}$.

Now, we define the linear differential operator (cf. \cite{[1]})
\begin{equation}\label{5.1}
L:=L_{k,\ell}:=\sum_{j=k}^\ell a_j(x)[d^j/dx^j].
\end{equation}
G. A. Anastassiou and O. Shisha \cite{[1]} consider the operator (\ref{5.1}) with $a_j(x)$ under some condition
on $[-1,1]$. They showed that if $L(f) \ge 0$ for $f\in C^{(r)}[-1,1]$,
there exist $Q_n \in \mathcal{P}_n$ such that $L(Q_n)\ge 0$ and for some constant $C>0$,
\begin{equation*}
\left\|f-Q_n\right\|_{L_{\infty}([-1,1])}
\le C n^{\ell -r}\omega\left(f^{(p)};\frac1n\right),
\end{equation*}
where $\omega\left(f^{(p)};t\right)$ is the modulus of continuity.
In this section, we will obtain a similar result  with exponential-type weighted $L_{\infty}$-norm as the above result.
Our main theorem is as follows.
\begin{theorem}\label{Theorem 5.1}
Let $k$ and $\ell$ be integers with $0\le k\le \ell\le r$.
Let $w=\exp(-Q)\in \mathcal{F}_\lambda(C^{r+3}+)$,
and let $T(x)$ be continuous on $\mathbb{R}$. Suppose that
$w(x)f^{(r)}(x)\to 0$ as $|x|\to \infty$.
Let $P_{n;f,w_{-1/4}}\in\mathcal{P}_n$ be the best approximation for $f$ with the weight $w_{-1/4}$ on $\mathbb{R}$.
Suppose that for a certain $\delta>0$,
\begin{equation*}
 L(f;x)\ge \delta, \quad  x\in\mathbb{R}.
\end{equation*}
Then, for every integer $n\ge 1$ and $j=0,1,...,\ell$,
\begin{equation}\label{5.2}
\left\|\left(f^{(j)}-P_{n;f,w_{-1/4}}^{(j)}\right)wT^{-(2j+1)/4}(x)\right\|_{L_\infty(\mathbb{R})}
\le C_j\left(\frac{a_n}{n}\right)^{r-j}E_{n-r}\left(w,f^{(r)}\right),
\end{equation}
where $C_j>0$,   $0\le j\le \ell$, are independent of $n$ or $f$,
and for any fixed number $M>0$ there exists a constant $N(M,\ell,\delta)>0$ such that
\begin{equation}\label{5.3}
 L(P_{n;f,w_{-1/4}};x)\ge \frac{\delta}{2}, \quad |x|\le M, \quad   n\ge N(M,\ell,\delta).
\end{equation}
\end{theorem}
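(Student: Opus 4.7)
The plan is to read the derivative estimate \eqref{5.2} directly off Corollary \ref{Corollary 2.4}(2), and then to get the monotonicity statement \eqref{5.3} by writing $L(P_{n;f,w_{-1/4}})=L(f)-[L(f)-L(P_{n;f,w_{-1/4}})]$ and showing that the second bracket tends to $0$ uniformly on $|x|\le M$.

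For \eqref{5.2}, note that by the construction of the family $w_\alpha$, the weight $w_{-(2j+1)/4}$ satisfies $w_{-(2j+1)/4}(x)\sim T^{-(2j+1)/4}(x)w(x)$. The hypothesis $w f^{(r)}\to 0$ at infinity together with $w\in\mathcal{F}_\lambda(C^{r+3}+)$ is exactly the hypothesis of Corollary \ref{Corollary 2.4}(2); applying that corollary with the index $k$ renamed to $j$ gives
\begin{equation*}
\left\|\bigl(f^{(j)}-P_{n;f,w_{-1/4}}^{(j)}\bigr)\,w\,T^{-(2j+1)/4}\right\|_{L_\infty(\mathbb{R})}
\le C_j\Bigl(\frac{a_n}{n}\Bigr)^{r-j}E_{n-r}\bigl(w,f^{(r)}\bigr),
\end{equation*}
for every $0\le j\le r$, which in particular holds for $0\le j\le \ell$.

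For \eqref{5.3}, fix $M>0$ and consider $|x|\le M$. Since $T$ is continuous on $\mathbb{R}$ and $Q$ is continuous, the quantity $T^{(2j+1)/4}(x)/w(x)=T^{(2j+1)/4}(x)e^{Q(x)}$ is bounded on $[-M,M]$ by a constant $B(M,j)$. Using \eqref{5.2} pointwise and the boundedness of the coefficients $a_j$, we obtain
\begin{equation*}
\bigl|L(f;x)-L(P_{n;f,w_{-1/4}};x)\bigr|
\le \sum_{j=k}^{\ell}\|a_j\|_\infty\, B(M,j)\,C_j\Bigl(\frac{a_n}{n}\Bigr)^{r-j}E_{n-r}\bigl(w,f^{(r)}\bigr)
\end{equation*}
uniformly for $|x|\le M$. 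Now $a_n/n\to 0$ (for Erd\H{o}s-type weights by Lemma \ref{Lemma 3.2}(4) with any $\eta<1$; for Freud-type weights this is classical), and $E_{n-r}(w,f^{(r)})\to 0$ because $wf^{(r)}$ vanishes at infinity so polynomials are dense in the corresponding weighted space. Therefore the right-hand side tends to $0$ as $n\to\infty$, and we may choose $N(M,\ell,\delta)$ so large that the right-hand side is at most $\delta/2$ for every $n\ge N(M,\ell,\delta)$. For such $n$ and $|x|\le M$,
\begin{equation*}
L(P_{n;f,w_{-1/4}};x)\ge L(f;x)-\tfrac{\delta}{2}\ge \delta-\tfrac{\delta}{2}=\tfrac{\delta}{2},
\end{equation*}
which is \eqref{5.3}.

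There is no real obstacle; the whole proof is a bookkeeping exercise once Corollary \ref{Corollary 2.4}(2) is in hand. The only point requiring a little care is that, although $T^{(2j+1)/4}(x)/w(x)$ blows up as $|x|\to\infty$, on the compact set $|x|\le M$ the continuity of $T$ and $Q$ (guaranteed by the assumption that $T$ is continuous and by $w\in\mathcal{F}_\lambda(C^{r+3}+)$) makes it uniformly bounded, which is exactly what permits the uniform smallness of $L(f)-L(P_{n;f,w_{-1/4}})$ on $[-M,M]$ and hence the localized monotonicity conclusion \eqref{5.3}.
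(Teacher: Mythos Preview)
Your proof is correct and follows essentially the same route as the paper: derive \eqref{5.2} from Corollary~\ref{Corollary 2.4}(2), then bound $|L(f;x)-L(P_{n;f,w_{-1/4}};x)|$ on $|x|\le M$ via \eqref{5.2}, the boundedness of the $a_j$, and the finiteness of $w^{-1}T^{(2j+1)/4}$ on compacta, and let $n\to\infty$. The only cosmetic difference is that the paper factors out a single weight $wT^{-(2\ell+1)/4}$ (using $T\ge\Lambda>1$) before taking the sup over $|x|\le M$, whereas you keep separate constants $B(M,j)$ for each $j$; both lead to the same conclusion.
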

\begin{proof}
From Corollary \ref{Corollary 2.4}, we have (\ref{5.2}). Hence, we also have
\begin{eqnarray*}
&&\left|\left(L(f;x)-L\left(P_{n;f,w_{-1/4}};x\right)\right)w(x)T^{-(2\ell+1)/4}(x)\right|\\
&=&\left|\sum_{j=k}^\ell a_j(x)\left\{f^{(j)}(x)-P_{n;f,w_{-1/4}}^{(j)}(x)\right\}w(x)T^{-(2\ell+1)/4}(x)\right|\\
&\le& E_{n-r}\left(w,f^{(r)}\right)\sum_{j=k}^\ell |a_j(x)|C_j\left(\frac{a_n}{n}\right)^{r-j}\\
&\le& C_{k,\ell}\left(\frac{a_n}{n}\right)^{r-\ell}E_{n-r}\left(w,f^{(r)}\right),
\end{eqnarray*}
where we set $C_{k,\ell}:=\sum_{j=k}^\ell \|a_j\|_{L_\infty(\mathbb{R})}C_j$. Then we have for $|x|\le M$
\begin{eqnarray*}
&&\left|L(f;x)-L\left(P_{n;f,w_{-1/4}};x\right)\right|\\
&\le& C_{k,\ell}\left\|w^{-1}(x)T^{(2\ell+1)/4}(x)\right\|_{L_\infty(|x|\le M)}
\left(\frac{a_n}{n}\right)^{r-\ell}E_{n-r}\left(w,f^{(r)}\right).
\end{eqnarray*}
Here, for $\delta>0$ there exists $N(M,\ell,\delta)>0$ such that for $n\ge N(M,\ell,\delta)$
\begin{eqnarray*}
C_{k,\ell}\left\|w^{-1}(x)T^{(2\ell+1)/4}(x)\right\|_{L_\infty(|x|\le M)}
\left(\frac{a_n}{n}\right)^{r-\ell}E_{n-r}\left(w,f^{(r)}\right)
\le \frac{\delta}{2}.
\end{eqnarray*}
This follows from $w(x)f^{(r)}(x)\to 0$ as $|x|\to \infty$. Therefore we see
\begin{equation*}
 \frac{\delta}{2}\le L(f;x)-\frac{\delta}{2}\le L(P_{n;f,w_{-1/4}};x).
\end{equation*}
Consequently, we have (\ref{5.3}).
\end{proof}

%Hello, \TeX !

\end{document}